\theoremstyle{plain}
\newtheorem{theorem}{Theorem}[section]
\newtheorem{lemma}{Lemma}[section]
\newtheorem{proposition}[theorem]{Proposition}
\theoremstyle{definition}
\newtheorem{definition}{Definition}[section]
\theoremstyle{remark}
\newtheorem{remark}{Remark}[section]
\newcommand{\R}{\mathbb R}
\newcommand{\la}{\mathcal{L}^{\alpha}}
\newcommand{\al}{\alpha}
\newcommand{\bfd}{\mathcal{D}_{\lambda}^{\alpha}}
\begin{document}
    \title{{Boundedness of fractional integrals and fractional derivatives on Laguerre Lipschitz spaces}}

    \author{ He Wang \ \ \   Jizheng Huang \ \ \   Yu Liu\footnote{Corresponding author}} \date{}
    \maketitle

    {\bf Abstract:}\ In this paper, we study the boundedness of a class of fractional integrals and derivatives associated with
    Laguerre polynomial expansions on Laguerre Lipschitz spaces.
    The consideration of such operators is motivated by the study of corresponding results on Gaussian Lipschitz spaces.
    The key idea used here is to develop the Poisson integral theory in the Laguerre setting.

    {\bf Keywords:} fractional integration, fractional
    differentiation, Lipschitz spaces, Laguerre measure.

    {\bf AMS Mathematics Subject Classification:} 26A16, 46E35, 33C45.
    \section{Introduction}
    As we know, Lipschitz space is an class of  important function spaces which have been used in the fields of harmonic analysis and PDEs and
     it  has been
     intensively studied. As its generalization,
    the Gaussian Lipschitz space was defined by Gatto and Urbina \cite{Gatto} in
    terms of the Ornstein-Uhlenbeck Poisson kernel. In 2016, Liu and
    Sj\"ogren gave a characterization of this space via a combination of
    ordinary Lipschitz continuity conditions (see \cite{Liu2016}). In a
    subsequent paper \cite{Liu2017}, they similarly investigated  a Lipschitz
    space in the same setting and as applications, they also
    characterized that spaces by means of a Lipschitz-type continuity
    condition. Furthermore, the Lipschitz space associated with other
    operators has also been investigated by some scholars, see
    \cite{Kallel,zhang} and the references therein. Our investigation is
    devoted to Laguerre Lipschitz spaces and their associated issues
    that expand beyond those of Gaussian Lipschitz spaces (see
    \cite{Gatto}). In order to provide a foundation for our main
    findings, we introduce some fundamental concepts for the Laguerre
    operator (see \cite{Graczyk}).

    Consider the Euclidean space $\R_{+}^{d}=(0,\infty)^d$ endowed with the Laguerre measure $\mu_{\al}$, which is defined as
    \begin{equation*}
    \mathrm{d}\mu_{\al}(x)=\prod_{i=1}^{d}\frac{{x_i}^{\al_i}e^{-x_i}}{\Gamma(\al_i+1)}
    \mathrm{d}x
    \end{equation*} for a  multiindex $\alpha=(\alpha_1,\ldots,\alpha_d)$.

    The Laguerre differential operator is defined by
    \begin{equation*}
    \la=-\sum_{i=1}^{d}\bigg[x_i\frac{\partial^2}{\partial x_i^2}+(\al_i+1-x_i)\frac{\partial}{\partial x_i}\bigg],
    \end{equation*}
    which is positive and symmetric in $L^2(\R_{+}^d,\mathrm{d}\mu_{\al})$. Moreover, $\la$ has a closure which
     is self-adjoint in $L^2(\R_{+}^d,\mathrm{d}\mu_{\al})$
    and it also will be denoted by $\la$.

    Given $\al>-1$,  the one-dimensional Laguerre polynomials of type
    $\al$ are denoted by
    \begin{equation*}
    L_k^{\al}(x)=\frac{1}{k!}e^{x}x^{-\al}\frac{\mathrm{d}^k}{\mathrm{d}x^k}(e^{-x}x^{k+\al}),\ k\in\mathbb{N}, \ x>0.
    \end{equation*}
    Note that each $L_k^{\al}$ is a polynomial of degree $k$. Given a multiindex
    $\al=(\al_1,\ldots,\al_d)$ with $\al\in (-1,\infty)^d$, the $d$-dimensional Laguerre
    polynomials of type $\al$ are tensor products of the one-dimensional Laguerre
    polynomials, that is,
    \begin{equation*}
    L_k^{\al}(x)=\prod_{i=1}^d L_{k_i}^{\al_i}(x_i),\ k\in\mathbb{N}^d,\ x\in\R_{+}^d,
    \end{equation*}
    then it is well known that the Laguerre polynomials are eigenfunctions of
    $\la$, that is,
    \begin{equation}\label{eqq1}
    \la L_k^{\al}(x)=|k|L_k^{\al}(x).
    \end{equation}

    By the orthogonality of the Laguerre polynomials with respect to $\mu_{\al}$, it is easy to see that the system $\{L_k^{\al}:k\in\mathbb{N}^d\}$ constitutes
    an orthogonal basis in $L^2(\R_{+}^d,\mathrm{d}\mu_{\al})$. Thus we have an
    orthogonal decomposition:
    \begin{equation*}
    L^{2}(\R_{+}^d,\mathrm{d}\mu_{\al})=\bigoplus_{n=0}^{\infty}\mathcal{H}_n,
    \end{equation*}
    where $\mathcal{H}_n=\mathrm{lin}\{L_k^{\al}:|k|=n\}$.

    The heat semigroup generated by $\la$ and defined in $L^2(\R_{+}^d,\mathrm{d}\mu_{\al})$ is called the Laguerre semigroup
    \begin{equation*}
    T_t^{\al}=e^{-t\la},\ t\ge 0,
    \end{equation*}
   which can be defined as
    \begin{equation}\label{eq1}
    T_t^{\al}f(x)=\int_{\R_{+}^d}G_t^{\al}(x,y)f(y)\mathrm{d}\mu_{\al}(y),\ f\in
    L^p(\R_{+}^d,\mathrm{d}\mu_{\al}).
    \end{equation}
Via the Hille-Hardy formula \cite[Chapter 1, Section 1.11]{HMHL}, we
know that the kernel $G_t^{\al}(x,y)$ may be computed explicitly,
that is,
    \begin{equation}\label{eq2}
    G_t^{\al}(x,y)=\prod_{j=1}^d\frac{\Gamma(\al_j+1)}{1-e^{-t}}e^{-\frac{e^{-t}}{1-e^{-t}}(x_j+y_j)}\sqrt{e^{-t}x_j y_j}^{-\al_j}I_{\al_j}\bigg(\frac{2\sqrt{e^{-t}x_jy_j}}{1-e^{-t}}\bigg),
    \end{equation}
    where $I_{\nu}$ denotes the modified Bessel function of the first kind and order $\nu$ (see \cite{NNL}).

    The paper is organized as follows. In Section \ref{section2}, we introduce
    the Poisson-Laguerre semigroup, which is the semigroup subordinated to the
    Laguerre semigroup. By obtaining  some properties of the modified Bessel function
    (\ref{eq3}) and (\ref{eq4}), we deduce the representation of $T_t^{\al}f(x)$:
    \begin{equation*}
    T_{t}^{\al}f(x)\approx\frac{2^{|\al|}}{(1-e^{-t})^{d+|\al|}}
    \int_{\R_{+}^d}\prod_{j=1}^d{y_j}^{\al_j}e^{-\frac{e^{-t}x_j+y_j}{1-e^{-t}}}f(y)\mathrm{d}y,\quad \mathrm{for}\ \frac{\sqrt{e^{-t}x_jy_j}}{1-e^{-t}}\le 1,\ j=1,2,\ldots,d,
    \end{equation*}
    and
    \begin{equation*}
    \begin{split}
    T_{t}^{\al}f(x)&=\frac{e^{(|\al|+\frac{d}{2})\frac{t}{2}}}{(2\sqrt{\pi})^d(1-e^{-t})^{\frac{d}{2}}}\int_{\R_{+}^d}\prod_{j=1}^{d}\frac{y_j^{\frac{\al_j}{2}-\frac{1}{4}}}{x_j^{\frac{\al_j}{2}+\frac{1}{4}}}e^{-\frac{(\sqrt{e^{-t}x_j}-\sqrt{y_j})^2}{1-e^{-t}}}
    \bigg[1-\frac{[\al_j,1]}{2}\bigg(\frac{2\sqrt{e^{-t}xy}}{1-e^{-t}}\bigg)^{-1}+\frac{[\al_j,2]}{4}\\
    &\quad\times\bigg(\frac{2\sqrt{e^{-t}xy}}{1-e^{-t}}\bigg)^{-2}
    +O\bigg(\bigg(\frac{2\sqrt{e^{-t}xy}}{1-e^{-t}}\bigg)^{-3}\bigg)\bigg]
    f(y)\mathrm{d}y,\quad\mathrm{for}\ \frac{\sqrt{e^{-t}x_jy_j}}{1-e^{-t}}>1,\ j=1,2,\ldots,d,
    \end{split}
    \end{equation*}
and $\mathrm{for}\ \frac{\sqrt{e^{-t}x_jy_j}}{1-e^{-t}}\le 1\ (j=1,2,\ldots,k), \frac{\sqrt{e^{-t}x_jy_j}}{1-e^{-t}}>1\ (j=k+1,k+2,\ldots,d), \mathrm{where}\ k\in\mathbb{N},0\le k\le d$,
\begin{equation*}
    \begin{split}
        T_{t}^{\al}f(x)&=\frac{2^{\al_1+\al_2+\cdots+\al_k}e^{(\al_{k+1}+\al_{k+2}+\cdots+\al_d+\frac{d-k}{2})\frac{t}{2}}}{(2\sqrt{\pi})^{d-k}(1-e^{-t})^{\al_1+\al_2+\cdots+\al_k+\frac{k+d}{2}}}\int_{\R_{+}^d}\prod_{j=1}^k{y_j}^{\al_j}e^{-\frac{e^{-t}x_j+y_j}{1-e^{-t}}}
        \prod_{j=k+1}^{d}\frac{y_j^{\frac{\al_j}{2}-\frac{1}{4}}}{x_j^{\frac{\al_j}{2}+\frac{1}{4}}}e^{-\frac{(\sqrt{e^{-t}x_j}-\sqrt{y_j})^2}{1-e^{-t}}}\\
        &\quad\times\bigg[1-\frac{[\al_j,1]}{2}\bigg(\frac{2\sqrt{e^{-t}xy}}{1-e^{-t}}\bigg)^{-1}+\frac{[\al_j,2]}{4}\bigg(\frac{2\sqrt{e^{-t}xy}}{1-e^{-t}}\bigg)^{-2}
        +O\bigg(\bigg(\frac{2\sqrt{e^{-t}xy}}{1-e^{-t}}\bigg)^{-3}\bigg)\bigg]f(y)\mathrm{d}y.
    \end{split}
\end{equation*}

    Using the relation between the Poisson-Laguerre semigroup and the
    Laguerre semigroup in (\ref{eq5.1}), we obtain the kernel estimation of the Poisson-Laguerre semigroup. In Lemma \ref{lem2.1},
     we derive the $L^1(\R_{+}^d)$ estimate of the derivative of the Poisson-Laguerre kernel $p_t^{\al}(x,y)$ with
     respect to the time variable $t$ according to two distinct cases.

    In Section \ref{sec-2}, we obtain the equivalence between   conditions (\ref{eq-14}) and (\ref{eq-15}), see Proposition \ref{pro3.1}.
     In Definition \ref{def3.1}, we introduce Laguerre Lipschitz spaces denoted by $Lip_{\beta}(\mu_{\al})$ and prove
      two basic properties: the monotonicity property (see Proposition \ref{pro3.2}) and the approximation property (see Proposition \ref{pro3.3}).

    Section \ref{sec-3} is devoted to the boundedness of fractional integrals
    and fractional derivatives associated with Laguerre expansions on $Lip_{\beta}(\mu_{\al})$. In Theorem \ref{thm3.1}, we obtain the boundedness of the Bessel Laguerre potential $\mathcal{J}_{\lambda}^{\al}$ from $Lip_{\beta}(\mu_{\al})$ to $Lip_{\beta+\lambda}(\mu_{\al})$ for $\beta,\lambda>0$. For $0<\lambda<\beta<1$, Theorem \ref{thm4.2} deduces the boundedness of the Laguerre fractional derivative $D_{\lambda}^{\al}$ from $Lip_{\beta}(\mu_{\al})$ to $Lip_{\beta-\lambda}(\mu_{\al})$. Similarly, the boundedness of the Bessel Laguerre fractional derivative is proved in Theorem \ref{thm3.3}. Finally, Theorem \ref{thm4.4} reveals the boundedness of above two operators on Lipschitz spaces for $1\le \lambda<\beta$.

    Throughout this paper, we will use $c$ and $C$ to denote the positive constants, which are independent of main parameters and may be different at each occurrence. By $X\approx Y$, we mean that $Y\lesssim X\lesssim Y$, where the second estimate means that there exists a positive constant $C$, independent of main parameters, such that $X\le CY$. Similarly, one writes $V\gtrsim U$ for $V\ge cU$. We also use the following notation in our paper: $\mathrm{N}=\{0,1,2,\ldots\}$.
    \section{Some results for Laguerre semigroups}\label{section2}
    Firstly, we start by recalling some basic notations and facts from the Laguerre semigroup theory. For more details, see \cite{BCT,Gszego} and the references therein.

    We are going to quote two properties of the modified Bessel function $I_{\nu}$ that we will need in the subsequent proofs (see \cite[Chap. 5]{NNL}),
     \begin{equation}\label{eq3}
        I_{\nu}(z)\approx z^{\nu}\ \mathrm{as}\ z\to 0^{+},
     \end{equation}
     for $n=0,1,2,\ldots$, and
     \begin{equation}\label{eq4}
        \sqrt{z}I_{\nu}(z)=\frac{e^z}{\sqrt{2\pi}}\bigg(\sum_{r=0}^{n}(-1)^r
        [\nu,r](2z)^{-r}+O(z^{-n-1})\bigg),
     \end{equation}
     where $[\nu,0]=1$ and $[\nu,r]=\frac{(4\nu^2-1)(4\nu^2-3^2)\cdots(4\nu^2-(2r-1)^2)}{2^{2r}\Gamma(r+1)}$, $r=1,2,\ldots$.

     It follows from \cite{Graczyk} that $\{T_t^{\al}\}_{t\ge 0}$ is a symmetric diffusion semigroup. In particular $T_t^{\al}\mathbf{1}=\mathbf{1}$ and $T_0^{\al}f(x)=f(x)$\ a.e. $x\in\R_{+}^d$.
     The Poisson-Laguerre semigroup $P_t^{\al}=e^{-t\sqrt{\la}}$,\ $t\ge 0$, can be defined from $\{T_t^{\al}\}_{t\ge 0}$ by subordination as
     \begin{equation}\label{eq5.1}
        P_t^{\al}f(x)=\frac{1}{\sqrt{\pi}}\int_{0}^{\infty}\frac{e^{-u}}{\sqrt{u}}T_{\frac{t^2}{4u}}^{\al}f(x)\mathrm{d}u.
     \end{equation}
     Furthermore, by a change of   variables, we obtain the subsequent representation
     \begin{equation}\label{eq7.1}
        P_t^{\al}f(x)=\int_0^{\infty}T_s^{\al}f(x)\mu_t^{(\frac{1}{2})}\mathrm{d}s,
     \end{equation}
     where the one-side stable measure on $(0,\infty)$ of order $\frac{1}{2}$ is defined by
     \begin{equation*}
        \mu_t^{(\frac{1}{2})}\mathrm{d}s:=\frac{t}{2\sqrt{\pi}}\frac{e^{-\frac{t^2}{4s}}}{s^{\frac{3}{2}}}\mathrm{d}s=g(t,s)\mathrm{d}s.
     \end{equation*}

     Using (\ref{eq1}) and the expression of the Laguerre heat kernel (\ref{eq2}),
     we obtain
     \begin{equation}\label{eq5}
      \begin{split}
      T_t^{\al}f(x)&=\int_{\R_{+}^d}\prod_{j=1}^d\frac{\Gamma(\al_j+1)}{1-e^{-t}}e^{-\frac{e^{-t}}{1-e^{-t}}(x_j+y_j)}\sqrt{e^{-t}x_j y_j}^{-\al_j}I_{\al_j}\bigg(\frac{2\sqrt{e^{-t}x_jy_j}}{1-e^{-t}}\bigg)f(y)\mathrm{d}\mu_{\al}(y)\\
      &=\frac{1}{(1-e^{-t})^d}\int_{\R_{+}^d}\prod_{j=1}^d
      e^{-\frac{e^{-t}}{1-e^{-t}}(x_j+y_j)-y_j}\sqrt{e^{-t}x_j y_j}^{-\al_j}{y_{j}}^{\al_j}I_{\al_j}\bigg(\frac{2\sqrt{e^{-t}x_jy_j}}{1-e^{-t}}\bigg)f(y)
      \mathrm{d}y\\
      &=\frac{1}{(1-e^{-t})^d}\int_{\R_{+}^d}\prod_{j=1}^d\bigg(\frac{y_j}{x_j}\bigg)^{\frac{\al_j}{2}}
      e^{-\frac{e^{-t}x_j+y_j}{1-e^{-t}}+\frac{\al_jt}{2}}I_{\al_j}\bigg(\frac{2\sqrt{e^{-t}x_jy_j}}{1-e^{-t}}\bigg)f(y)\mathrm{d}y.
      \end{split}
     \end{equation}

     From (\ref{eq5.1}), (\ref{eq5}) and after the change of the variable
     $r=e^{-\frac{t^2}{4u}}$ and exchanging the order of integration, we obtain
     \begin{equation*}
        \begin{split}
P_t^{\al}f(x)&=\frac{1}{\sqrt{\pi}}\int_{0}^{\infty}\frac{e^{-u}}{\sqrt{u}(1-e^{-\frac{t^2}{4u}})^d}\int_{\R_+^d}\prod_{j=1}^{d}\bigg(\frac{y_j}{x_j}\bigg)^{\frac{\al_j}{2}}e^{-\frac{e^{-\frac{t^2}{4u}}x_j+y_j}{1-e^{-\frac{t^2}{4u}}}+\frac{\al_jt^2}{8u}}I_{\al_j}\bigg(\frac{2\sqrt{e^{-\frac{t^2}{4u}}x_jy_j}}{1-e^{-\frac{t^2}{4u}}}\bigg)f(y)\mathrm{d}y\mathrm{d}u\\
&=\frac{1}{2\sqrt{\pi}}\int_{\R_{+}^d}\int_0^1\frac{te^{\frac{t^2}{4\log r}}}{(1-r)^d(-\log r)^{\frac{3}{2}}}\prod_{j=1}^{d}\bigg(\frac{y_j}{x_j}\bigg)^{\frac{\al_j}{2}}e^{-\frac{rx_j+y_j}{1-r}-\frac{\al_j\log r}{2}}I_{\al_j}\bigg(\frac{2\sqrt{rx_jy_j}}
{1-r}\bigg)\frac{\mathrm{d}r}{r}f(y)\mathrm{d}y\\
&=\int_{\R_{+}^d}p_t^{\al}(x,y)f(y)\mathrm{d}y,
        \end{split}
     \end{equation*}
where
\begin{equation}\label{eq9-1}
     p_t^{\al}(x,y):=\frac{1}{2\sqrt{\pi}}\int_0^1\frac{te^{\frac{t^2}{4\log r}}}{(1-r)^d(-\log r)^{\frac{3}{2}}}\prod_{j=1}^{d}\bigg(\frac{y_j}{x_j}\bigg)^{\frac{\al_j}{2}}e^{-\frac{rx_j+y_j}{1-r}-\frac{\al_j\log r}{2}}I_{\al_j}\bigg(\frac{2\sqrt{rx_jy_j}}
     {1-r}\bigg)\frac{\mathrm{d}r}{r}.
\end{equation}

    The family $\{P_t^{\al}\}_{t\ge 0}$ is also a symmetric diffusion semigroup equipped with the infinitesimal generator $-(-\la)^{\frac{1}{2}}$. In particular,
    $P_t^{\al}\mathbf{1}=\mathbf{1}$ due to (\ref{eq7.1}) and $T_t^{\al}\mathbf{1}=\mathbf{1}$.

    By (\ref{eqq1}), we obtain
    \begin{equation*}
        T_t^{\al}L_k^{\al}=e^{-t|k|}L_k^{\al}
    \end{equation*}
    and
    \begin{equation*}
        P_{t}^{\al}L_k^{\al}=e^{-t\sqrt{|k|}}L_k^{\al},
    \end{equation*}
    which imply that $L_k^{\al}$ are eigenfunctions of $\{T_t^{\al}\}_{t\ge 0}$ and $\{P_t^{\al}\}_{t\ge 0}$, respectively.
    Since the maximal operator $(T^{\al})^*f(x)=\sup_{t>0}|T_t^{\al}f(x)|$ satisfies the weak type $(1,1)$ inequality with respect to the measure
    $\mathrm{d}\mu_{\al}$ (see \cite{Dinger}), therefore,
    \begin{equation*}
        T_0^{\al}f(x)=\lim_{t\rightarrow 0^{+}}T_t^{\al}f(x)=f(x)\ \mathrm{a.e.}\
        x\in\R_{+}^d
    \end{equation*}
    and
    \begin{equation*}
        T_{\infty}^{\al}f(x)=\lim_{t\rightarrow \infty}T_t^{\al}f(x)=\int_{\R_{+}^d}
        f(y)\mathrm{d}\mu_{\al}(y)\ \mathrm{a.e.}\ x\in\R_{+}^d
    \end{equation*}
    hold for all $f\in L^1(\R_{+}^d,\mathrm{d}\mu_{\al})$, and hence hold for all
    $f\in L^p(\R_{+}^d,\mathrm{d}\mu_{\al})$ with $1\le p\le\infty$ due
    to $L^q(\R_{+}^d,\mathrm{d}\mu_{\al})\subset L^p(\R_{+}^d,\mathrm{d}\mu_{\al})$ for
    $p\le q$. Please refer to \cite{STH} for more results about Laguerre expansions.

    Secondly, we will need the following lemma for the $L^1$-norm
    of the derivatives of the kernel $p_t^{\al}(x,y)$.
    \begin{lemma}\label{lem2.1}
        Assume that $p_t^{\al}(x,y)$ is the Poisson-Laguerre kernel.
         \item{\rm (i)} If $\frac{\sqrt{e^{-t}x_jy_j}}{1-e^{-t}}\le 1$, $j=1,2,\ldots,d$, then
        \begin{equation}\label{eq10}
            \int_{\R_{+}^d}\bigg|\frac{\partial p_t^{\al}(x,y)}{\partial t}\bigg|\mathrm{d}y\le\frac{C}{t},
        \end{equation}
        where $C$ is a constant independent of $x$ and $t$.
        \item{\rm (ii)} If $\frac{\sqrt{e^{-t}x_jy_j}}{1-e^{-t}}> 1$, $j=1,2,\ldots,d$, and $\al\in(-\frac{1}{2},\infty)^d$, then
        \begin{equation*}
        \int_{\R_{+}^d}\bigg|\frac{\partial p_t^{\al}(x,y)}{\partial t}\bigg|\mathrm{d}y\le\frac{C}{t},
        \end{equation*}
        where $C$ is a constant independent of $x$ and $t$.
        \item{\rm (iii)} If $\frac{\sqrt{e^{-t}x_jy_j}}{1-e^{-t}}\le 1$, $j=1,2,\ldots,k$,
        and $\frac{\sqrt{e^{-t}x_jy_j}}{1-e^{-t}}> 1$ with $\al_j>-\frac{1}{2}$ for $j=k+1,k+2,\ldots,d$, where $k\in\mathbb{N}$, $0\le k\le d$, then
         \begin{equation*}
            \int_{\R_{+}^d}\bigg|\frac{\partial p_t^{\al}(x,y)}{\partial t}\bigg|\mathrm{d}y\le\frac{C}{t}.
         \end{equation*}

        Additionally, for any positive integer $k$ and $\al\in (-\frac{1}{2},\infty)^d$, we obtain
        \begin{equation}\label{eq11}
        \int_{\R_{+}^d}\bigg|\frac{\partial^k p_t^{\al}(x,y)}{\partial t^k}\bigg|\mathrm{d}y\le\frac{C}{t^k}.
        \end{equation}
    \end{lemma}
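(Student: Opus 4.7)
The plan is to reduce the bound to a scalar estimate on derivatives of the one-sided stable density $g(t,s)$ appearing in (\ref{eq7.1}), via subordination. Combining (\ref{eq7.1}) with (\ref{eq5}), write
\[
p_t^{\al}(x,y) \;=\; \int_0^{\infty} K_s^{\al}(x,y)\,g(t,s)\,\mathrm{d}s,
\]
where $K_s^{\al}(x,y)\ge 0$ denotes the kernel of $T_s^{\al}$ relative to $\mathrm{d}y$ (equivalently, $G_s^{\al}(x,y)\prod_j y_j^{\al_j}e^{-y_j}/\Gamma(\al_j+1)$), and the conservation $T_s^{\al}\mathbf{1}=\mathbf{1}$ furnishes the crucial normalization $\int_{\R_+^d}K_s^{\al}(x,y)\,\mathrm{d}y=1$ for every $s>0$ and $x\in\R_+^d$.

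Only $g(t,s)$ depends on $t$. A dominated-convergence argument justifies differentiation under the $s$-integral: for $t$ in any compact subinterval $[t_0,t_1]\subset(0,\infty)$, $|\partial_t^{k}g(t,s)|$ is bounded by a constant times $s^{-(k+2)/2}e^{-t_0^{2}/(8s)}$ near $s=0$ and by $s^{-(k+2)/2}$ near $s=\infty$, both integrable against $K_s^{\al}(x,y)\,\mathrm{d}y\,\mathrm{d}s$. Positivity of $K_s^{\al}$ and Tonelli then yield
\[
\int_{\R_+^d}\bigg|\frac{\partial^{k}p_t^{\al}(x,y)}{\partial t^{k}}\bigg|\,\mathrm{d}y \;\le\; \int_0^{\infty}\bigg|\frac{\partial^{k}g(t,s)}{\partial t^{k}}\bigg|\,\mathrm{d}s.
\]
Exploiting the self-similarity $g(t,s)=s^{-1}\varphi(t/\sqrt{s})$ with $\varphi(u)=(2\sqrt{\pi})^{-1}u\,e^{-u^{2}/4}$, induction gives $\partial_t^{k}g(t,s)=s^{-(k+2)/2}\varphi^{(k)}(t/\sqrt{s})$. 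The substitution $u=t/\sqrt{s}$ converts the right-hand side into
\[
\frac{2}{t^{k}}\int_0^{\infty} u^{k-1}|\varphi^{(k)}(u)|\,\mathrm{d}u \;\le\; \frac{C_{k}}{t^{k}},
\]
since each $\varphi^{(k)}$ is a polynomial in $u$ times $e^{-u^{2}/4}$, making the $u$-integral finite. Setting $k=1$ recovers (\ref{eq10}) and the general case produces (\ref{eq11}).

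The three cases (i)--(iii) split $\R_+^d$ into subregions determined by the size of $\sqrt{e^{-t}x_jy_j}/(1-e^{-t})$; these match the three representations of $T_t^{\al}f(x)$ exhibited in the Introduction, which in turn follow from the small- and large-argument expansions (\ref{eq3})--(\ref{eq4}) of $I_{\al_j}$. In each case the bound on the restricted $y$-integral is an immediate consequence of the global estimate above. The hypothesis $\al_j>-1/2$ in (ii) and (iii) is inherited from the natural range in which (\ref{eq4}) yields the localized positive representation used in the Introduction; it plays no role in the global subordination bound. The main obstacle is thus the justification of the interchange of differentiation and $s$-integration uniformly in $t$; once secured by the dominated-convergence argument described above, what remains is only the elementary scaling of $g(t,s)$ and a standard Gamma-function computation.
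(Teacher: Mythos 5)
Your proposal is correct, and it takes a genuinely different (and considerably cleaner) route than the paper. The paper works directly with the explicit representation (\ref{eq9-1}) of $p_t^{\al}$: it differentiates under the $r$-integral, invokes the small-argument estimate (\ref{eq3}) of $I_{\al_j}$ in case (i) and the large-argument expansion (\ref{eq4}) in cases (ii)--(iii), controls the resulting $y$-integrals via the Gamma-function identity (\ref{eq13-1}) and the splitting (\ref{eq17.2})--(\ref{eq20.1}) (which is where the hypothesis $\al_j>-\tfrac12$ enters), reduces every case to the same scalar integral (\ref{eq12}), and finally obtains higher-order derivatives by induction through the semigroup property. You instead push the absolute value inside the subordination formula (\ref{eq7.1}), use positivity of the Laguerre heat kernel together with the conservation $T_s^{\al}\mathbf{1}=\mathbf{1}$ to eliminate the $y$-integral entirely, and are left with $\int_0^{\infty}|\partial_t^{k}g(t,s)|\,\mathrm{d}s$, which the self-similarity of the stable density evaluates as $C_k t^{-k}$. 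For $k=1$ your scalar integral is literally the paper's (\ref{eq12}) after the substitution $s=-\log r$, so the two arguments terminate in the same computation; the difference is that you reach it with no Bessel asymptotics, no case analysis, no restriction to $\al\in(-\tfrac12,\infty)^d$, and all orders $k$ at once rather than by induction. What your route requires in exchange is the justification of differentiating under the $s$-integral, which your Gaussian domination $|\partial_t^{k}g(t,s)|\lesssim s^{-(k+2)/2}e^{-t_0^{2}/(8s)}$ on $t\ge t_0$ handles adequately; the two structural inputs you need ($K_s^{\al}\ge 0$ and $T_s^{\al}\mathbf{1}=\mathbf{1}$) are both recorded in Section~\ref{section2}. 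Your remark that the restricted statements (i)--(iii) follow from the global bound by nonnegativity of the integrand is also correct, and your approach in fact yields the lemma under the weaker hypothesis $\al\in(-1,\infty)^d$.
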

    \begin{proof}
(i) We first prove (\ref{eq10}). By the expression of the Poisson-Laguerre kernel (\ref{eq9-1}) we have
        \begin{equation}\label{eq-12}
\frac{\partial p_t^{\al}(x,y)}{\partial t}=\frac{1}{2\sqrt{\pi}}\int_0^1\frac{(1+\frac{t^2}{2\log r})e^{\frac{t^2}{4\log r}}}{(1-r)^d(-\log r)^{\frac{3}{2}}}\prod_{j=1}^{d}
\bigg(\frac{y_j}{x_j}\bigg)^{\frac{\al_j}{2}}e^{-\frac{rx_j+y_j}{1-r}-\frac{\al_j\log r}{2}}I_{\al_j}
\bigg(\frac{2\sqrt{rx_jy_j}}{1-r}\bigg)\frac{\mathrm{d}r}{r}.
        \end{equation}
From (\ref{eq3}) and by exchanging the order of integration, in case
$\frac{\sqrt{e^{-t}x_jy_j}}{1-e^{-t}}\le 1$, $j=1,2,\ldots,d$, we
obtain
\begin{align*}
    &\int_{\R_{+}^d}\bigg|\frac{\partial p_t^{\al}(x,y)}{\partial t}\bigg|\mathrm{d}y\\
    &\le\frac{1}{2\sqrt{\pi}}\int_{\R_{+}^d}\int_0^1\frac{|1+\frac{t^2}{2\log r}|e^{\frac{t^2}{4\log r}}}{(1-r)^d(-\log r)^{\frac{3}{2}}}\prod_{j=1}^{d}\bigg(\frac{y_j}{x_j}\bigg)^{\frac{\al_j}{2}}
    e^{-\frac{rx_j+y_j}{1-r}-\frac{\al_j\log r}{2}}\bigg|I_{\al_j}
    \bigg(\frac{2\sqrt{rx_jy_j}}{1-r}\bigg)\bigg|\frac{\mathrm{d}r}{r}\mathrm{d}y\\
    &\approx\frac{1}{2\sqrt{\pi}}\int_{\R_{+}^d}\int_0^1\frac{|1+\frac{t^2}{2\log r}|e^{\frac{t^2}{4\log r}}}{(1-r)^d(-\log r)^{\frac{3}{2}}}\prod_{j=1}^{d}\bigg(\frac{y_j}{x_j}
    \bigg)^{\frac{\al_j}{2}}e^{-\frac{rx_j+y_j}{1-r}-\frac{\al_j\log r}{2}}
    \bigg(\frac{2\sqrt{rx_jy_j}}{1-r}\bigg)^{\al_{j}}\frac{\mathrm{d}r}{r}\mathrm{d}y\\
    &=\frac{2^{|\al|-1}}{\sqrt{\pi}}\int_0^1\frac{e^{\frac{t^2}{4\log r}}}{(-\log r)^{\frac{3}{2}}}\bigg|1+\frac{t^2}{2\log r}\bigg|\int_{\R_{+}^d}\prod_{j=1}^d
    \frac{e^{-\frac{rx_j+y_j}{1-r}}}{(1-r)^{\al_j+1}}y_j^{\al_j}\mathrm{d}y\frac{\mathrm{d}r}{r}\\
    &\le\frac{2^{|\al|-1}\prod_{j=1}^{d}\Gamma(\al_j+1)}{\sqrt{\pi}}
    \int_0^1\frac{e^{\frac{t^2}{4\log r}}}{(-\log r)^{\frac{3}{2}}}\bigg|1+\frac{t^2}{2\log r}\bigg|e^{-\frac{r|x|}{1-r}}\frac{\mathrm{d}r}{r}\\
    &\le C_{\al}\int_0^1\frac{e^{\frac{t^2}{4\log r}}}{(-\log r)^{\frac{3}{2}}}\bigg|1+\frac{t^2}{2\log r}\bigg|\frac{\mathrm{d}r}{r},
\end{align*}
where in the second inequality we have used the fact that
        \begin{equation}\label{eq13-1}
        \int_{\R_{+}^d}\prod_{j=1}^d
        \frac{e^{-\frac{rx_j+y_j}{1-r}}}{(1-r)^{\al_j+1}}y_j^{\al_j}   \mathrm{d}y=\prod_{j=1}^d\Gamma(\al_j+1)e^{-\frac{rx_j}{1-r}}.
        \end{equation}
        Therefore, to show (\ref{eq10}) it suffices to prove
        \begin{equation}\label{eq12}
            \int_0^1\frac{e^{\frac{t^2}{4\log r}}}{(-\log r)^{\frac{3}{2}}}\bigg|1+\frac{t^2}{2\log r}\bigg|
            \frac{\mathrm{d}r}{r}\le \frac{C}{t}.
        \end{equation}
        Taking $s=-\log r$, we get
        \begin{align*}
            \int_{0}^1\frac{e^{\frac{t^2}{4\log r}}}{(-\log r)^{\frac{3}{2}}}\bigg|1+\frac{t^2}{2\log r}\bigg|\frac{\mathrm{d}r}{r}&=\int_{0}^{\infty}\frac{e^{-\frac{t^2}{4s}}}{s^{\frac{3}{2}}}\bigg|1-\frac{t^2}{2s}\bigg|\mathrm{d}s\\
            &\le \int_{0}^{\infty}\frac{e^{-\frac{t^2}{4s}}}{s^{\frac{3}{2}}}
            \mathrm{d}s+\int_{0}^{\infty}\frac{e^{-\frac{t^2}{4s}}}{s^{\frac{3}{2}}}
            \frac{t^2}{2s}\mathrm{d}s\\
            &=:I+II.
        \end{align*}
        For $I$, via the change of the variable $v=\frac{t^2}{4s}$, we have
        \begin{align*}
            I&=\int_{0}^{\infty}e^{-v}\bigg(\frac{t^2}{4v}\bigg)^{-\frac{3}{2}}
            \frac{t^2}{4v^2}\mathrm{d}v
            =\int_{0}^{\infty}e^{-v}\frac{(4v)^{\frac{3}{2}}}{t^3}\frac{t^2}{4v^2}\mathrm{d}v\\
            &=\frac{C}{t}\int_{0}^{\infty}e^{-v}v^{-\frac{1}{2}}\mathrm{d}v
            =\frac{C\Gamma(\frac{1}{2})}{t}=\frac{C^{\prime}}{t}.
        \end{align*}
        Similarly,
        \begin{align*}
            II
            &=2\int_{0}^{\infty}e^{-v}\bigg(\frac{t^2}{4v}\bigg)^{-\frac{3}{2}}v\frac{t^2}{4v^2}\mathrm{d}v
            =2\int_{0}^{\infty}e^{-v}\frac{(4v)^{\frac{3}{2}}}{t^3}v\frac{t^2}{4v^2}\mathrm{d}v\\
            &=\frac{C}{t}\int_{0}^{\infty}e^{-v}v^{\frac{1}{2}}\mathrm{d}v
            =\frac{C\Gamma(\frac{3}{2})}{t}=\frac{C^{\prime}}{t}.
        \end{align*}
     (ii) Similarly, using (\ref{eq4}) and (\ref{eq-12}), we obtain the following estimate
    \begin{equation*}
        \begin{split}
            &\frac{\partial p_t^{\al}(x,y)}{\partial t}\\
            &=\frac{1}{2\sqrt{\pi}}\int_0^1\frac{(1+\frac{t^2}{2\log r})e^{\frac{t^2}{4\log r}}}{(1-r)^d(-\log r)^{\frac{3}{2}}}\prod_{j=1}^{d}
            \bigg(\frac{y_j}{x_j}\bigg)^{\frac{\al_j}{2}}e^{-\frac{rx_j+y_j}{1-r}-\frac{\al_j\log r}{2}}I_{\al_j}\bigg(\frac{2\sqrt{rx_jy_j}}{1-r}\bigg)\frac{\mathrm{d}r}{r}\\
            &=\frac{1}{2\sqrt{\pi}}\int_0^1\frac{(1+\frac{t^2}{2\log r})e^{\frac{t^2}{4\log r}}}{(1-r)^d(-\log r)^{\frac{3}{2}}}\prod_{j=1}^{d}
            \bigg(\frac{y_j}{x_j}\bigg)^{\frac{\al_j}{2}}e^{-\frac{rx_j+y_j}{1-r}-\frac{\al_j\log r}{2}}\bigg(\frac{2\sqrt{rx_jy_j}}{1-r}\bigg)^{-\frac{1}{2}}
            \bigg(\frac{2\sqrt{rx_jy_j}}{1-r}\bigg)^{\frac{1}{2}}\\
            &\quad\times I_{\al_j}
            \bigg(\frac{2\sqrt{rx_jy_j}}{1-r}\bigg)\frac{\mathrm{d}r}{r}\\
            &=\frac{1}{(2\sqrt{\pi})^{d+1}}\int_{0}^{1}\frac{(1+\frac{t^2}{2\log r})e^{\frac{t^2}{4\log r}}}{(1-r)^{\frac{d}{2}}(-\log r)^{\frac{3}{2}}}\prod_{j=1}^{d}
            \frac{y_j^{\frac{\al_j}{2}-\frac{1}{4}}}{x_j^{\frac{\al_j}{2}+\frac{1}{4}}}e^{-\frac{(\sqrt{rx_j}-\sqrt{y_j})^2}{1-r}}r^{-\frac{1}{2}(\al_j+\frac{1}{2})}
            \bigg[1-\frac{[\al_j,1]}{2}\bigg(\frac{2\sqrt{rx_jy_j}}{1-r}\bigg)^{-1}\\
            &\quad+\frac{[\al_j,2]}{4}\bigg(\frac{2\sqrt{rx_jy_j}}{1-r}\bigg)^{-2}+O\bigg(\bigg(\frac{2\sqrt{rx_jy_j}}{1-r}\bigg)^{-3}\bigg)\bigg]\frac{\mathrm{d}r}{r}
        \end{split}
    \end{equation*}for every $x,y\in\R_{+}^d$, $t\in(0,\infty)$ such that
    $\frac{\sqrt{e^{-t}x_jy_j}}{1-e^{-t}}>1$.

        Since $\frac{\sqrt{e^{-t}x_jy_j}}{1-e^{-t}}> 1$ for $j=1,2,\ldots,d$, or equivalently
        $\frac{\sqrt{rx_jy_j}}{1-r}> 1$ for $j=1,2,\ldots,d$, then there exists a constant $C$ such that
        \begin{equation}\label{eq-14.1}
        \bigg|1-\frac{[\al_j,1]}{2}\bigg(\frac{2\sqrt{rx_jy_j}}{1-r}\bigg)^{-1}+\frac{[\al_j,2]}{4}\bigg(\frac{2\sqrt{rx_jy_j}}{1-r}\bigg)^{-2}+O\bigg(\bigg(\frac{2\sqrt{rx_jy_j}}{1-r}\bigg)^{-3}\bigg) \bigg|\le C.
        \end{equation}
        Thus we have
        \begin{equation}\label{eq16.1}
            \bigg|\frac{\partial p_t^{\al}(x,y)}{\partial t}\bigg|\le C\int_0^1
            \frac{e^{\frac{t^2}{4\log r}}}{(-\log r)^{\frac{3}{2}}}\bigg|1+\frac{t^2}{2\log r}\bigg|\prod_{j=1}^d
            \frac{y_j^{\frac{\al_j}{2}-\frac{1}{4}}}{x_j^{\frac{\al_j}{2}+\frac{1}{4}}}\frac{e^{-\frac{(\sqrt{rx_j}-\sqrt{y_j})^2}{1-r}}          }{r^{\frac{\al_j}{2}+\frac{1}{4}}\sqrt{1-r}}\frac{\mathrm{d}r}{r}.
        \end{equation}
        On the other hand, we split the following integral into two integrals over $(0,rx_j)$ and $[rx_j,\infty)$, which are called $I_j$ and $II_j$,
         respectively. Then we have
        \begin{equation}\label{eq17.2}
            \begin{split}
            &\int_{\R_{+}^d}
            \prod_{j=1}^d\frac{y_j^{\frac{\al_j}{2}-\frac{1}{4}}}{x_j^{\frac{\al_j}{2}+\frac{1}{4}}}e^{-\frac{(\sqrt{rx_j}-\sqrt{y_j})^2}{1-r}}\mathrm{d}y\\
            &\quad=\prod_{j=1}^d\int_{0}^{\infty}
            \frac{y_j^{\frac{\al_j}{2}-\frac{1}{4}}}{x_j^{\frac{\al_j}{2}+\frac{1}{4}}}e^{-\frac{(\sqrt{rx_j}-\sqrt{y_j})^2}{1-r}}\mathrm{d}y_j\\
            &\quad=\prod_{j=1}^d\bigg(\int_{0}^{rx_j}
            \frac{y_j^{\frac{\al_j}{2}-\frac{1}{4}}}{x_j^{\frac{\al_j}{2}+\frac{1}{4}}}e^{-\frac{(\sqrt{rx_j}-\sqrt{y_j})^2}{1-r}}\mathrm{d}y_j+
            \int_{rx_j}^{\infty}\frac{y_j^{\frac{\al_j}{2}-\frac{1}{4}}}{x_j^{\frac{\al_j}{2}+\frac{1}{4}}}e^{-\frac{(\sqrt{rx_j}-\sqrt{y_j})^2}{1-r}}\mathrm{d}y_j\bigg)\\
            &\quad=:\prod_{j=1}^d(I_j+II_j).
            \end{split}
        \end{equation}
        For $I_j$, a change of the variable $\sqrt{z_j}=\frac{\sqrt{rx_j}-\sqrt{y_j}}{\sqrt{1-r}}$
        leads to
        \begin{equation}\label{eq17.1}
        \begin{split}
            I_j&=\frac{\sqrt{1-r}}{x_j^{\frac{\al_j}{2}+\frac{1}{4}}}\int_{0}^{\frac{rx_j}{1-r}}\bigg(\sqrt{rx_j}-\sqrt{(1-r)z_j}\bigg)^{\al_j+\frac{1}{2}}z_j^{-\frac{1}{2}}e^{-z_j}\mathrm{d}z_j\\
            &\le\frac{\sqrt{1-r}}{x_j^{\frac{\al_j}{2}+\frac{1}{4}}}\int_{0}^{\infty}\bigg(\sqrt{rx_j}-\sqrt{(1-r)z_j}\bigg)^{\al_j+\frac{1}{2}}z_j^{-\frac{1}{2}}e^{-z_j}
            \mathrm{d}z_j.
        \end{split}
        \end{equation}
        For $II_j$, by a change of  the variable  $\sqrt{z_j}=\frac{\sqrt{y_j}-\sqrt{rx_j}}{\sqrt{1-r}}$ again we deduce
        \begin{equation}\label{eq18-1}
            II_j=\frac{\sqrt{1-r}}{x_j^{\frac{\al_j}{2}+\frac{1}{4}}}\int_{0}^{\infty}\bigg(\sqrt{rx_j}+\sqrt{(1-r)z_j}\bigg)^{\al_j+\frac{1}{2}}z_j^{-\frac{1}{2}}e^{-z_j}
            \mathrm{d}z_j.
        \end{equation}
        By inserting (\ref{eq17.1}) and (\ref{eq18-1}) into (\ref{eq17.2}) and assuming that $\al_j>-\frac{1}{2}$ for every given $1\le j\le d$ we get
        \begin{align}\label{eq20.1}
        &\int_{\R_{+}^d}\nonumber
        \prod_{j=1}^d\frac{y_j^{\frac{\al_j}{2}-\frac{1}{4}}}{x_j^{\frac{\al_j}{2}+\frac{1}{4}}}
        e^{-\frac{(\sqrt{rx_j}-\sqrt{y_j})^2}{1-r}}\mathrm{d}y\\ \nonumber
        &\quad\le\prod_{j=1}^d\frac{\sqrt{1-r}}{x_j^{\frac{\al_j}{2}+\frac{1}{4}}}\int_0^{\infty}
        \bigg[\bigg(\sqrt{rx_j}-\sqrt{(1-r)z_j}\bigg)^{\al_j+\frac{1}{2}}+\bigg(\sqrt{rx_j}+\sqrt{(1-r)z_j}\bigg)^{\al_j+\frac{1}{2}}\bigg]z_j^{-\frac{1}{2}}e^{-z_j}
        \mathrm{d}z_j\\ \nonumber
        &\quad\le\prod_{j=1}^d\frac{\sqrt{1-r}}{x_j^{\frac{\al_j}{2}+\frac{1}{4}}}
        \int_0^{\infty}\big(2\sqrt{rx_j}\big)^{\al_j+\frac{1}{2}}z_j^{-\frac{1}{2}}e^{-z_j}
        \mathrm{d}z_j\\
        &\quad=(2\pi)^{\frac{d}{2}}2^{|\al|}(1-r)^{\frac{d}{2}}r^{\frac{|\al|}{2}+\frac{d}{4}}.
        \end{align}
        Hence, via (\ref{eq16.1}) and (\ref{eq20.1}) we have
        \begin{align*}
            &\int_{\R_{+}^d}\bigg|\frac{\partial p_t^{\al}(x,y)}{\partial t}\bigg|
            \mathrm{d}y\\
            &\quad\le C\int_{\R_{+}^d}\int_0^1
            \frac{e^{\frac{t^2}{4\log r}}}{(-\log r)^{\frac{3}{2}}}\bigg|1+\frac{t^2}{2\log r}\bigg|\bigg(\prod_{j=1}^d\frac{y_j^{\frac{\al_j}{2}-\frac{1}{4}}}{x_j^{\frac{\al_j}{2}+\frac{1}{4}}}
            \frac{e^{-\frac{(\sqrt{rx_j}-\sqrt{y_j})^2}{1-r}}          }{r^{\frac{\al_j}{2}+\frac{1}{4}}\sqrt{1-r}}\bigg)\frac{\mathrm{d}r}{r}\mathrm{d}y\\
            &\quad\le C\int_{0}^1 \frac{e^{\frac{t^2}{4\log r}}}{(-\log r)^{\frac{3}{2}}}\bigg|1+\frac{t^2}{2\log r}\bigg|\bigg(\int_{\R_{+}^d}
            \prod_{j=1}^d\frac{y_j^{\frac{\al_j}{2}-\frac{1}{4}}}{x_j^{\frac{\al_j}{2}+\frac{1}{4}}}e^{-\frac{(\sqrt{rx_j}-\sqrt{y_j})^2}{1-r}}\mathrm{d}y
            \bigg)\frac{(1-r)^{-\frac{d}{2}}}{r^{\frac{|\al|}{2}+\frac{d}{4}}}\frac{\mathrm{d}r}{r}\\
            &\quad\le C\int_{0}^1 \frac{e^{\frac{t^2}{4\log r}}}{(-\log r)^{\frac{3}{2}}}\bigg|1+\frac{t^2}{2\log
            r}\bigg|\frac{\mathrm{d}r}{r}
        \end{align*}
         with $\al\in(-\frac{1}{2},\infty)^d$. Because of (\ref{eq12}) and proceed as before, one has
        \begin{equation*}
            \int_{\R_{+}^d}\bigg|\frac{\partial p_t^{\al}(x,y)}{\partial t}\bigg|\mathrm{d}y\le \frac{C}{t}.
        \end{equation*}
     (iii) If $\frac{\sqrt{e^{-t}x_jy_j}}{1-e^{-t}}\le 1$, $j=1,2,\ldots,k$,
     and $\frac{\sqrt{e^{-t}x_jy_j}}{1-e^{-t}}> 1$ with $\al_j>-\frac{1}{2}$ for $j=k+1,k+2,\ldots,d$, where $k\in\mathbb{N},0\le k\le d$, using the integral representation of Poisson-Laguerre kernel (\ref{eq-12}) and exchanging the order of integration and (\ref{eq-14.1}), we get
     \begin{align*}
        &\int_{\R_{+}^d}\bigg|\frac{\partial p_t^{\al}(x,y)}{\partial t}\bigg|\mathrm{d}y\\
        &\le\frac{1}{2\sqrt{\pi}}\int_{\R_{+}^d}\int_0^1\frac{|1+\frac{t^2}{2\log r}|e^{\frac{t^2}{4\log r}}}{(1-r)^d(-\log r)^{\frac{3}{2}}}\prod_{j=1}^{d}\bigg(\frac{y_j}{x_j}\bigg)^{\frac{\al_j}{2}}
        e^{-\frac{rx_j+y_j}{1-r}-\frac{\al_j\log r}{2}}\bigg|I_{\al_j}
        \bigg(\frac{2\sqrt{rx_jy_j}}{1-r}\bigg)\bigg|\frac{\mathrm{d}r}{r}\mathrm{d}y\\
        &\approx\frac{1}{2\sqrt{\pi}}\int_{\R_{+}^d}\int_0^1\frac{|1+\frac{t^2}{2\log r}|e^{\frac{t^2}{4\log r}}}{(1-r)^d(-\log r)^{\frac{3}{2}}}\prod_{j=1}^{k}
        \bigg(\frac{y_j}{x_j}
        \bigg)^{\frac{\al_j}{2}}e^{-\frac{rx_j+y_j}{1-r}-\frac{\al_j\log r}{2}}
        \bigg(\frac{2\sqrt{rx_jy_j}}{1-r}\bigg)^{\al_{j}}\prod_{j=k+1}^{d}
        \bigg(\frac{y_j}{x_j}\bigg)^{\frac{\al_j}{2}}\\
        &\quad\times e^{-\frac{rx_j+y_j}{1-r}-\frac{\al_j\log r}{2}}\bigg(\frac{2\sqrt{rx_jy_j}}{1-r}\bigg)^{-\frac{1}{2}}
        \bigg(\frac{2\sqrt{rx_jy_j}}{1-r}\bigg)^{\frac{1}{2}}I_{\al_j}
        \bigg(\frac{2\sqrt{rx_jy_j}}{1-r}\bigg)\frac{\mathrm{d}r}{r}\mathrm{d}y\\
        &\le\frac{2^{(\al_1+\al_2+\cdots+\al_k)}\prod_{j=1}^{k}\Gamma(\al_j+1)}{(2\sqrt{\pi})^{d-k+1}}\int_0^1\frac{e^{\frac{t^2}{4\log r}}}{(-\log r)^{\frac{3}{2}}}\bigg|1+\frac{t^2}{2\log r}
        \bigg|\int_{\R_{+}^k}\prod_{j=1}^k
        \frac{e^{-\frac{rx_j+y_j}{1-r}}}{(1-r)^{\al_j+1}}y_j^{\al_j}\mathrm{d}y_1\mathrm{d}y_2\cdots\mathrm{d}y_k\\
        &\quad\times\int_{\R_{+}^{d-k}}\prod_{j=k+1}^{d}
        \frac{y_j^{\frac{\al_j}{2}-\frac{1}{4}}}{x_j^{\frac{\al_j}{2}+\frac{1}{4}}}\frac{e^{-\frac{(\sqrt{rx_j}-\sqrt{y_j})^2}{1-r}}}{r^{\frac{\al_j}{2}+\frac{1}{4}}\sqrt{1-r}}
        \mathrm{d}y_{k+1}\mathrm{d}y_{k+2}\cdots\mathrm{d}y_{d}\frac{\mathrm{d}r}{r}\\
        &\le\frac{2^{(\al_1+\al_2+\cdots+\al_k)}\prod_{j=1}^{k}\Gamma(\al_j+1)}{(2\sqrt{\pi})^{d-k+1}}\int_0^1\frac{e^{\frac{t^2}{4\log r}}}{(-\log r)^{\frac{3}{2}}}\bigg|1+\frac{t^2}{2\log r}\bigg|e^{-\frac{r(x_1+x_2+\cdots+x_k)}{1-r}}\\
        &\quad\times\int_{\R_{+}^{d-k}}\prod_{j=k+1}^{d}
        \frac{y_j^{\frac{\al_j}{2}-\frac{1}{4}}}{x_j^{\frac{\al_j}{2}+\frac{1}{4}}}\frac{e^{-\frac{(\sqrt{rx_j}-\sqrt{y_j})^2}{1-r}}}{r^{\frac{\al_j}{2}+\frac{1}{4}}\sqrt{1-r}}
        \mathrm{d}y_{k+1}\mathrm{d}y_{k+2}\cdots\mathrm{d}y_{d}\frac{\mathrm{d}r}{r}\\
        &\le C\int_0^1\frac{e^{\frac{t^2}{4\log r}}}{(-\log r)^{\frac{3}{2}}}\bigg|1+\frac{t^2}{2\log r}\bigg|\frac{\mathrm{d}r}{r},
     \end{align*}
     where in the last two inequalities we have used (\ref{eq13-1}) and
     (\ref{eq20.1}),
     respectively. Therefore, we apply (\ref{eq12}) to deduce that $\int_{\R_{+}^d}\big|
     \frac{\partial p_t^{\al}(x,y)}{\partial t}\big|\mathrm{d}y\le \frac{C}{t}$, which deduces the  desired proof.

        Finally, we utilize the induction to establish the general case  of (\ref{eq11}).
        Assume that the case for $k=1$ has already been proved and  (\ref{eq11}) is valid for a certain $k$. We  need to verify its validity for
        $k+1$. Via the semigroup property and taking $u=t+s$  we obtain
        \begin{align*}
            \frac{\partial^{k+1}p_u^{\al}(x,y)}{\partial u^{k+1}}&=\frac{\partial}{\partial s}\frac{\partial^k}{\partial t^k}p_{t+s}^{\al}(x,y)\\
            &=\frac{\partial}{\partial s}\frac{\partial^k}{\partial t^k}\bigg[
            \int_{\R_{+}^d}p_s^{\al}(x,v)p_t^{\al}(v,y)\mathrm{d}v\bigg]\\
            &=\int_{\R_{+}^d}\frac{\partial p_{s}^{\al}(x,v)}{\partial s}\frac{\partial^k p_t^{\al}(v,y)}{\partial t^k}\mathrm{d}v.
        \end{align*}
        It follows that
        \begin{align*}
            \int_{\R_{+}^d}\bigg|\frac{\partial^{k+1}p_u^{\al}(x,y)}{\partial u^{k+1}}\bigg|\mathrm{d}y
            &\le\int_{\R_{+}^d}\int_{\R_{+}^d}\bigg|\frac{\partial p_s^{\al}(x,v)}{\partial s}\bigg| \bigg|\frac{\partial^k p_t^{\al}(v,y)}{\partial t^k}\bigg|\mathrm{d}v\mathrm{d}y\\
            &\le \int_{\R_{+}^d}\bigg|\frac{\partial p_s^{\al}(x,v)}{\partial s}  \bigg|\int_{\R_{+}^d}\bigg|\frac{\partial^k p_t^{\al}(v,y)}{\partial t^k}\bigg|
            \mathrm{d}y\mathrm{d}v\\
            &\le \frac{C}{s}\frac{C}{t^k}.
        \end{align*}
        Via taking $s=t=\frac{u}{2}$ we deduce that the case for $k+1$ is proved.
    \end{proof}

    \section{Laguerre Lipschitz spaces}\label{sec-2}
    It is well known that the Poisson semigroup provides an alternative characterization of
    the Lipschitz spaces (see \cite{Stein}). Following this approach and using the Poisson-Hermite semigroup,
    Gatto and Urbina \cite{Gatto} introduced the Gaussian Lipschitz space (see also \cite{GPU} and \cite{PUA}). Similarly, we
     can define the Laguerre Lipschitz space  as follows.
    \begin{definition}\label{def3.1}
        For $\beta>0$, let $n$ be the smallest integer greater than $\beta$.
        A $L^{\infty}$ function defined in $\R_{+}^d$ belongs to the Laguerre Lipschitz space
        $Lip_{\beta}(\mu_{\al})$ associated with $\la$ if there exists a constant $A_{\beta}(f)$ such that
        \begin{equation}\label{eq15}
        \bigg\|\frac{\partial^n P_t^{\al}f}{\partial t^n}\bigg\|_{\infty}\le
        A_{\beta}(f)t^{-n+\beta}.
        \end{equation}
        The norm of $f\in Lip_{\beta}(\mu_{\al})$ is defined as
        \begin{equation*}
        \|f\|_{Lip_{\beta}(\mu_{\al})}:=\|f\|_{\infty}+A_{\beta}(f),
        \end{equation*}
        where $A_{\beta}(f)$ is the smallest constant appearing in (\ref{eq15}).
    \end{definition}
    \begin{remark}\label{remk3.2}
    We only focus on the condition (\ref{eq15}) when $t$ approaches zero, since
    \begin{equation}\label{eq16}
    \bigg\|\frac{\partial^n P_t^{\al}f}{\partial t^n}\bigg\|_{\infty}\le
    At^{-n}
    \end{equation}
    implies (\ref{eq15}) when $t$ away from zero. For the proof of (\ref{eq16}), we conclude that for $f\in L^{\infty}$ and
     $\al\in(-\frac{1}{2},\infty)^d$,
    \begin{equation*}
    \bigg\|\frac{\partial^n P_t^{\al}f}{\partial t^n}\bigg\|_{\infty}\le
    \int_{\R_{+}^d}\bigg|\frac{\partial^n p_t^{\al}(x,y)}{\partial t^n}\bigg| |f(y)|\mathrm{d}y\le
    \frac{C}{t^n}\|f\|_{\infty},
    \end{equation*} where we have used    (\ref{eq11}) in the last step.
    \end{remark}
    Now, we need to establish the following equivalent relations:
    \begin{proposition}\label{pro3.1}
          Let $f\in L^{\infty}$, $\al\in(-\frac{1}{2},\infty)^d$ and $\beta>0$. Then, for any two integers $k$ and $l$ that both greater than $\beta$,
           we conclude that
          \begin{equation}\label{eq-14}
            \bigg\|\frac{\partial^k P_t^{\al}f}{\partial t^k}\bigg\|_{\infty}\le A_{\beta,k}t^{-k+\beta}
          \end{equation}
          and
          \begin{equation}\label{eq-15}
            \bigg\|\frac{\partial^l P_t^{\al}f}{\partial t^l}\bigg\|_{\infty}\le
            A_{\beta,l}t^{-l+\beta}
          \end{equation}
          are equivalent. Furthermore, the smallest constants $A_{\beta,k}$ and $A_{\beta,l}$ that satisfy the aforementioned inequalities are comparable.
    \end{proposition}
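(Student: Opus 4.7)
The plan is to assume without loss of generality that $l>k$ (the converse case is symmetric) and to prove each implication separately. The main technical point to anticipate is the convergence of the iterated improper integrals appearing in the second implication, which is exactly where the hypothesis $k>\beta$ enters.

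For $(\ref{eq-14})\Rightarrow(\ref{eq-15})$, I would exploit the semigroup identity $P_t^{\al}=P_{t/2}^{\al}\circ P_{t/2}^{\al}$ together with the fact that $\sqrt{\la}$ commutes with $P_s^{\al}$. Writing $\partial_t^l P_t^{\al}=(-\sqrt{\la})^l P_t^{\al}$ and factoring $(-\sqrt{\la})^l=(-\sqrt{\la})^{l-k}\circ(-\sqrt{\la})^k$ produces the kernel identity
\begin{equation*}
\frac{\partial^l P_t^{\al}f(x)}{\partial t^l}=\int_{\R_{+}^d}\frac{\partial^{l-k} p_s^{\al}(x,v)}{\partial s^{l-k}}\bigg|_{s=t/2}\frac{\partial^k P_s^{\al}f(v)}{\partial s^k}\bigg|_{s=t/2}\,\mathrm{d}v.
\end{equation*}
Taking the supremum in $x$, bounding the first factor in $L^1_v$ by (\ref{eq11}) of Lemma \ref{lem2.1} applied at time $t/2$, and using hypothesis (\ref{eq-14}) at time $t/2$ for the second factor, one immediately obtains $\|\partial_t^l P_t^{\al}f\|_{\infty}\le C\,A_{\beta,k}\,t^{-l+\beta}$, which is (\ref{eq-15}) with $A_{\beta,l}\le C A_{\beta,k}$.

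For $(\ref{eq-15})\Rightarrow(\ref{eq-14})$, I would argue by iterated integration. The estimate (\ref{eq11}) applied to $f\in L^{\infty}$, as already observed in Remark \ref{remk3.2}, gives $\|\partial_s^j P_s^{\al}f\|_{\infty}\le C s^{-j}\|f\|_{\infty}\to 0$ as $s\to\infty$ for every $j\ge 1$. The fundamental theorem of calculus therefore yields
\begin{equation*}
\frac{\partial^j P_t^{\al}f}{\partial t^j}=-\int_t^{\infty}\frac{\partial^{j+1} P_s^{\al}f}{\partial s^{j+1}}\,\mathrm{d}s,\qquad j\ge 1.
\end{equation*}
Iterating this identity $l-k$ times, stepping from $l$ down to $k$, and inserting (\ref{eq-15}) reduces the problem to computing iterated integrals of $s^{-j+\beta}$ for $j\in\{l,l-1,\ldots,k+1\}$; each stage converges precisely because $j\ge k+1>\beta+1$, the exponent increasing by one at every step. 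The resulting power count gives $\|\partial_t^k P_t^{\al}f\|_{\infty}\le C A_{\beta,l}\,t^{-k+\beta}$, which is (\ref{eq-14}) with $A_{\beta,k}\le C A_{\beta,l}$. Combining the two inequalities yields the asserted comparability $A_{\beta,k}\asymp A_{\beta,l}$.
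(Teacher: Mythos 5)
Your proposal is correct and follows essentially the same route as the paper: the forward implication via the semigroup factorization $P_t^{\al}=P_{t/2}^{\al}\circ P_{t/2}^{\al}$ combined with the $L^1$ kernel-derivative bounds of Lemma \ref{lem2.1}, and the reverse implication by integrating from infinity using the decay $\|\partial_s^j P_s^{\al}f\|_{\infty}\lesssim s^{-j}\|f\|_{\infty}$. The only cosmetic difference is that the paper reduces to the adjacent case $l=k+1$ and iterates, whereas you handle the gap $l-k$ in a single step using the higher-order bound (\ref{eq11}); both hinge on exactly the same ingredients and yield the same comparability of constants.
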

     \begin{proof}
        It is sufficient to prove that for $k>\beta$,
        \begin{equation}\label{eq13}
            \bigg\|\frac{\partial^k P_t^{\al}f}{\partial t^k}\bigg\|_{\infty}\le A_{\beta,k}t^{-k+\beta}
        \end{equation}
        and
        \begin{equation}\label{eq14}
            \bigg\|\frac{\partial^{k+1} P_t^{\al}f}{\partial t^{k+1}}\bigg\|_{\infty}\le
            A_{\beta,k+1}t^{-(k+1)+\beta}
        \end{equation}
        are equivalent.

        We first assume that (\ref{eq13}) holds. Then by the semigroup property of
        $\{P_t^{\al}\}_{t\ge 0}$ and Lemma \ref{lem2.1}, we get
        \begin{align*}
            \bigg\|\frac{\partial^{k+1} P_{t}^{\al}f}{\partial t^{k+1}}\bigg\|_{\infty}
            &=\bigg\|\frac{\partial P_{t_1}^{\al}}{\partial t_1}\bigg(\frac{\partial^k P_{t_2}^{\al}f}{\partial t_2^k}\bigg)\bigg\|_{\infty}\\
            &\le \bigg\|\frac{\partial^k P_{t_2}^{\al}f}{\partial t_2^k}\bigg\|_{\infty}\int_{\R_{+}^d}\bigg|\frac{\partial p_{t_1}^{\al}(\cdot,y)}{\partial t_1}\bigg| \mathrm{d}y\\
            &\le A_{\beta,k}t_2^{-k+\beta}Ct_1^{-1},
        \end{align*}
        where $t=t_1+t_2$. Taking $t_1=t_2=t/2$, we conclude that  (\ref{eq14}) is valid.

        Next, we assume that (\ref{eq14}) holds. Applying  Lemma \ref{lem2.1} again, we
        derive
        \begin{equation*}
            \bigg\|\frac{\partial^k P_{t}^{\al}f}{\partial t^k}\bigg\|_{\infty}\le
            \|f\|_{\infty}\int_{\R_{+}^d}\bigg|\frac{\partial^k p_t^{\al}(x,y)}{\partial t^k}\bigg|\mathrm{d}y\le\frac{C}{t^k}\|f\|_{\infty}.
        \end{equation*}
        Therefore,
        \begin{equation*}
            \lim_{t\to\infty} \frac{\partial^k P_t^{\al}f}{\partial t^k}=0
        \end{equation*}
        uniformly, and then
        \begin{equation*}
            \bigg\|\frac{\partial^k P_{t}^{\al}f}{\partial t^k}\bigg\|_{\infty}
            \le\int_{t}^{\infty}\bigg\|\frac{\partial^{k+1} P_{s}^{\al}f}{\partial s^{k+1}}\bigg\|_{\infty}\mathrm{d}s=Ct^{-k+\beta},
        \end{equation*}
        which implies that (\ref{eq13}) holds and the proof of Proposition \ref{pro3.1} is completed.
     \end{proof}

    \begin{remark}
    Proposition \ref{pro3.1} implies that  the definition of $Lip_{\beta}(\mu_{\al})$ doesn't depend on the index $k$ for $k>\beta$   and the resulting
        norms are equivalent provided that $\al\in (-1/2,\infty)^d$.
    \end{remark}

   At the same time, he Laguerre Lipschitz spaces have the following  monotonicity property, which can be similarly proved by adopting the method of Proposition 2.2 in \cite{Gatto}.
    \begin{proposition}\label{pro3.2}
    Let $\al\in(-\frac{1}{2},\infty)^d$.
    If $0<\beta_1<\beta_2$, then $Lip_{\beta_2}(\mu_{\al})\subset Lip_{\beta_1}(\mu_{\al})$.
    \end{proposition}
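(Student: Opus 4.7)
The plan is to reduce the two Lipschitz conditions to a common derivative order using Proposition \ref{pro3.1}, then split into small-$t$ and large-$t$ regimes. Let $n_1$ and $n_2$ be the smallest integers strictly greater than $\beta_1$ and $\beta_2$ respectively; I cannot directly compare $\|\partial^{n_2} P_t^{\al}f/\partial t^{n_2}\|_\infty$ with $\|\partial^{n_1}P_t^{\al}f/\partial t^{n_1}\|_\infty$ because $n_1\le n_2$ in general but they may differ. The cleanest workaround is to fix a single integer $k$ with $k>\beta_2$ (hence also $k>\beta_1$), and work with the equivalent condition at that level, applying Proposition \ref{pro3.1} at the end to pass back to the defining index $n_1$.

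First I would assume $f\in Lip_{\beta_2}(\mu_\alpha)$. By Proposition \ref{pro3.1} applied at $\beta=\beta_2$, there is a constant $A$ with $A\le C\|f\|_{Lip_{\beta_2}(\mu_\alpha)}$ such that
\begin{equation*}
\bigg\|\frac{\partial^k P_t^{\al}f}{\partial t^k}\bigg\|_{\infty}\le A\,t^{-k+\beta_2},\qquad t>0.
\end{equation*}
Next I would split into two regimes. For $0<t\le 1$, since $\beta_2>\beta_1$, I have $t^{\beta_2-\beta_1}\le 1$, which gives $t^{-k+\beta_2}\le t^{-k+\beta_1}$, so
\begin{equation*}
\bigg\|\frac{\partial^k P_t^{\al}f}{\partial t^k}\bigg\|_\infty\le A\,t^{-k+\beta_1}.
\end{equation*}
For $t>1$, I would invoke Remark \ref{remk3.2} (which uses the key kernel estimate \eqref{eq11} from Lemma \ref{lem2.1}, hence requires $\alpha\in(-1/2,\infty)^d$) to obtain
\begin{equation*}
\bigg\|\frac{\partial^k P_t^{\al}f}{\partial t^k}\bigg\|_\infty\le \frac{C\|f\|_\infty}{t^k}\le C\|f\|_\infty\,t^{-k+\beta_1},
\end{equation*}
the last inequality because $\beta_1>0$ and $t>1$.

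Combining the two regimes yields
\begin{equation*}
\bigg\|\frac{\partial^k P_t^{\al}f}{\partial t^k}\bigg\|_\infty\le C\bigl(A+\|f\|_\infty\bigr)t^{-k+\beta_1}\le C\|f\|_{Lip_{\beta_2}(\mu_\alpha)}\,t^{-k+\beta_1}
\end{equation*}
for every $t>0$. Applying Proposition \ref{pro3.1} one more time, now at level $\beta=\beta_1$, I can pass from the index $k$ back to the defining index $n_1$, producing the required bound $\|\partial^{n_1}P_t^{\al}f/\partial t^{n_1}\|_\infty\le A_{\beta_1}(f)\,t^{-n_1+\beta_1}$ with $A_{\beta_1}(f)\lesssim\|f\|_{Lip_{\beta_2}(\mu_\alpha)}$. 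Since $f$ is already $L^\infty$, this establishes $f\in Lip_{\beta_1}(\mu_\alpha)$ together with the norm inequality $\|f\|_{Lip_{\beta_1}(\mu_\alpha)}\le C\|f\|_{Lip_{\beta_2}(\mu_\alpha)}$, proving the continuous embedding.

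The only subtle point is the large-$t$ estimate, which is why the hypothesis $\alpha\in(-1/2,\infty)^d$ is indispensable: without it, the $L^1(\R_+^d)$ bound \eqref{eq11} on $\partial^k p_t^\alpha(x,y)/\partial t^k$ of Lemma \ref{lem2.1} is not available, and one cannot rule out growth of $\|\partial^k P_t^\alpha f/\partial t^k\|_\infty$ as $t\to\infty$ that would destroy the sharper decay $t^{-k+\beta_1}$ demanded by the target space. Everything else is essentially bookkeeping between the integer thresholds $n_1$, $n_2$ and the auxiliary index $k$, handled once and for all by Proposition \ref{pro3.1}.
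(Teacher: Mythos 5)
Your argument is correct and is exactly the method the paper has in mind: the paper omits the proof, deferring to the method of Proposition 2.2 of \cite{Gatto}, which is precisely your reduction to a common derivative order via Proposition \ref{pro3.1}, the trivial comparison $t^{-k+\beta_2}\le t^{-k+\beta_1}$ for $t\le 1$, and the crude bound $\|\partial^k P_t^{\al}f/\partial t^k\|_\infty\le C\|f\|_\infty t^{-k}\le C\|f\|_\infty t^{-k+\beta_1}$ from Lemma \ref{lem2.1} for $t>1$. Nothing is missing, and your remark on where the hypothesis $\al\in(-\tfrac12,\infty)^d$ enters is accurate.
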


    \begin{proposition}\label{pro3.3}
        Let $f\in Lip_{\beta}(\mu_{\al})$ and $0<\beta<1$, we have
        \begin{equation}\label{eq18.1}
           \|P_t^{\al}f-f\|_{\infty}\le A_{\beta}(f)t^{\beta}.
        \end{equation}
    \end{proposition}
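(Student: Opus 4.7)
The plan is to reduce the statement to the $n=1$ case of (\ref{eq15}) and then integrate in $t$. Since $0<\beta<1$, the smallest integer strictly greater than $\beta$ is $n=1$, so by Definition \ref{def3.1} we have the pointwise bound
\begin{equation*}
\left\|\frac{\partial P_s^{\alpha}f}{\partial s}\right\|_{\infty}\le A_{\beta}(f)\,s^{\beta-1}, \qquad s>0.
\end{equation*}
Crucially, because $\beta>0$, the function $s\mapsto s^{\beta-1}$ is integrable on $(0,t)$, which is exactly what will make a fundamental-theorem-of-calculus argument work.

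Next I would use the semigroup identity
\begin{equation*}
P_t^{\alpha}f(x)-P_{\varepsilon}^{\alpha}f(x)=\int_{\varepsilon}^{t}\frac{\partial P_s^{\alpha}f(x)}{\partial s}\,\mathrm{d}s,\qquad 0<\varepsilon<t,
\end{equation*}
valid for every $x\in\R_+^d$. Taking essential suprema and applying the derivative bound yields
\begin{equation*}
\|P_t^{\alpha}f-P_{\varepsilon}^{\alpha}f\|_{\infty}\le A_{\beta}(f)\int_{\varepsilon}^{t}s^{\beta-1}\,\mathrm{d}s\le \frac{A_{\beta}(f)}{\beta}\,t^{\beta},
\end{equation*}
uniformly in $\varepsilon$. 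Passing to the limit $\varepsilon\to 0^+$ and using the a.e.\ convergence $P_{\varepsilon}^{\alpha}f\to f$ (which holds because $\{T_t^{\alpha}\}_{t\ge 0}$ is a symmetric diffusion semigroup with $T_0^{\alpha}f=f$ a.e., and hence by the subordination formula (\ref{eq7.1}) together with dominated convergence, the same is true for $\{P_t^{\alpha}\}_{t\ge 0}$) produces the desired estimate (\ref{eq18.1}), where the harmless factor $1/\beta$ is absorbed into $A_{\beta}(f)$.

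The only step that requires care is the passage to the limit $\varepsilon\to 0^+$: one must know that $P_{\varepsilon}^{\alpha}f\to f$ in a pointwise almost everywhere sense for $f\in L^{\infty}$. This is not a quantitative statement about rates, but merely the base diffusion-semigroup property already invoked in Section \ref{section2} (where it is noted that $T_0^{\alpha}f(x)=\lim_{t\to 0^+}T_t^{\alpha}f(x)=f(x)$ a.e., extending from $L^1$ to all $L^p$, $1\le p\le\infty$, and then inherited by $P_t^{\alpha}$ via (\ref{eq7.1})). Once this qualitative convergence is in hand, the rest of the argument is a one-line integration and poses no real obstacle.
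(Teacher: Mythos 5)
Your argument is essentially identical to the paper's: both reduce to the $n=1$ case of the defining estimate $\|\partial_s P_s^{\al}f\|_{\infty}\le A_{\beta}(f)s^{\beta-1}$ and integrate via the fundamental theorem of calculus; the paper simply writes $P_t^{\al}f-f=\int_0^t\partial_s P_s^{\al}f\,\mathrm{d}s$ directly, while you justify the endpoint $s=0$ by a limit $\varepsilon\to 0^+$. Your added care there (and your explicit tracking of the factor $1/\beta$, which the paper silently absorbs) is a minor refinement, not a different route, and the proof is correct.
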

    \begin{proof}
Using the fundamental theorem of Calculus, we get
        \begin{align*}
          \|P_t^{\al}f-f\|_{\infty}=\bigg\|\int_0^t\frac{\partial P_s^{\al}f}{\partial s}\mathrm{d}s\bigg\|_{\infty}\le\int_0^t \bigg\|\frac{\partial P_s^{\al}f}{\partial s}\bigg\|_{\infty}\mathrm{d}s
          \le A_{\beta}(f)\int_0^t s^{-1+\beta}\mathrm{d}s=A_{\beta}(f)t^{\beta}.
        \end{align*}
    \end{proof}
    \section{Boundedness of fractional integrals and fractional derivatives associated with Laguerre expansions on $Lip_{\beta}(\mu_{\al})$}\label{sec-3}
    The Bessel Laguerre potential of order $\lambda>0$ denoted by $\mathcal{J}_{\lambda}^{\al}$,
    associated to the Laguerre measure, is defined formally as
    \begin{equation*}
        \mathcal{J}_{\lambda}^{\al}=(I+\la)^{-\frac{\lambda}{2}}.
    \end{equation*}
    By (\ref{eqq1}) we have
    \begin{equation*}
       \mathcal{J}_{\lambda}^{\al}L_k^{\al}(x)=\frac{1}{(1+|k|)^{\frac{\lambda}{2}}}L_k^{\al}(x)
    \end{equation*}  for any Laguerre polynomial $L_k^{\al}(x)$.

    Through the principle of linearity, this definition can be broadened to encompass any polynomial, and
    P.A. Meyer's theorem enables us to extend Bessel Laguerre potentials to form a continuous operator
     on $L^p(\R_{+}^d,d\mu_{\al})$ for $1<p<\infty$ (see \cite[Lemma 6.1]{Graczyk}).
    Bessel Laguerre potentials can be defined in another form as an alternative
    integral representation:
    \begin{equation}\label{eq17}
        \mathcal{J}_{\lambda}^{\al}f(x)=\frac{1}{\Gamma(\lambda)}
        \int_0^{\infty}s^{\lambda-1}
        e^{-s}P_s^{\al}f(x)\mathrm{d}s,\ f\in L^p(\R_{+}^d,\mathrm{d}\mu_{\al}).
    \end{equation}
 Please refer to \cite{Graczyk} for more details.

    In what follows, we give the action of the Bessel Laguerre potential on Laguerre Lipschitz spaces.
    \begin{theorem}\label{thm3.1}
    Let $\al\in(-\frac{1}{2},\infty)^d$, $\beta>0$ and $\lambda>0$. Then $\mathcal{J}_{\lambda}^{\al}$ is bounded from
    $Lip_{\beta}(\mu_{\al})$ to $Lip_{\beta+\lambda}(\mu_{\al})$.
    \end{theorem}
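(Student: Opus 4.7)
\medskip

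\noindent\textbf{Proof proposal.} The plan is to exploit the subordination formula (\ref{eq17}) together with the semigroup property, so that everything reduces to a one-variable integral estimate involving the Lipschitz-type control on $\partial_t^n P_t^{\al}f$ supplied by Proposition~\ref{pro3.1}.

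First I would establish the $L^\infty$ part of the norm. Since $P_s^{\al}\mathbf{1}=\mathbf{1}$ and $\|P_s^{\al}f\|_\infty\le\|f\|_\infty$ for all $s>0$, the integral representation (\ref{eq17}) immediately yields
\[
\|\mathcal{J}_\lambda^{\al}f\|_\infty\le\frac{\|f\|_\infty}{\Gamma(\lambda)}\int_0^\infty s^{\lambda-1}e^{-s}\,ds=\|f\|_\infty.
\]
For the seminorm, let $n$ be the smallest integer strictly greater than $\beta+\lambda$; in particular $n>\beta$ as well. Because $\{P_t^{\al}\}_{t\ge0}$ is a semigroup that commutes with $\mathcal{J}_\lambda^{\al}$, I would write
\[
P_t^{\al}\mathcal{J}_\lambda^{\al}f(x)=\frac{1}{\Gamma(\lambda)}\int_0^\infty s^{\lambda-1}e^{-s}P_{t+s}^{\al}f(x)\,ds,
\]
differentiate $n$ times under the integral sign (justified by (\ref{eq11}) and the factor $e^{-s}$), and replace $\partial_t^n$ by $\partial_{t+s}^n$ to obtain
\[
\frac{\partial^n P_t^{\al}\mathcal{J}_\lambda^{\al}f(x)}{\partial t^n}=\frac{1}{\Gamma(\lambda)}\int_0^\infty s^{\lambda-1}e^{-s}\frac{\partial^n P_{t+s}^{\al}f(x)}{\partial(t+s)^n}\,ds.
\]

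Next, since $f\in Lip_\beta(\mu_\al)$ and $n>\beta$, Proposition~\ref{pro3.1} supplies the estimate
\[
\Bigl\|\frac{\partial^n P_{t+s}^{\al}f}{\partial(t+s)^n}\Bigr\|_\infty\le A_{\beta,n}(f)\,(t+s)^{\beta-n}.
\]
Plugging this in reduces the problem to verifying that
\[
\int_0^\infty s^{\lambda-1}e^{-s}(t+s)^{\beta-n}\,ds\le C\,t^{\beta+\lambda-n}.
\]
I would split the integral at $s=t$. On $(0,t]$ we use $(t+s)^{\beta-n}\le t^{\beta-n}$ (as $\beta-n<0$) and bound $\int_0^t s^{\lambda-1}\,ds=t^\lambda/\lambda$, giving the desired $t^{\beta+\lambda-n}$. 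On $(t,\infty)$ we use $(t+s)^{\beta-n}\le s^{\beta-n}$ and estimate
\[
\int_t^\infty s^{\beta+\lambda-n-1}e^{-s}\,ds\le\int_t^1 s^{\beta+\lambda-n-1}\,ds+\int_1^\infty s^{\beta+\lambda-n-1}e^{-s}\,ds\le C\,t^{\beta+\lambda-n},
\]
where we used $\beta+\lambda-n<0$ so that the incomplete Gamma integral near $0$ is governed by the power $t^{\beta+\lambda-n}$.

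Finally, I would appeal to Remark~\ref{remk3.2}: the estimate just derived handles the crucial regime $t\to0^+$, while for $t$ bounded away from zero the general bound (\ref{eq11}) applied to $\mathcal{J}_\lambda^{\al}f\in L^\infty$ already gives $\|\partial_t^n P_t^{\al}\mathcal{J}_\lambda^{\al}f\|_\infty\le C t^{-n}\|f\|_\infty\le Ct^{\beta+\lambda-n}\|f\|_\infty$. Combining both ranges of $t$ proves $A_{\beta+\lambda}(\mathcal{J}_\lambda^{\al}f)\le C\|f\|_{Lip_\beta(\mu_\al)}$, which together with the $L^\infty$ bound yields $\|\mathcal{J}_\lambda^{\al}f\|_{Lip_{\beta+\lambda}(\mu_\al)}\le C\|f\|_{Lip_\beta(\mu_\al)}$. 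The only genuinely delicate point is ensuring that the splitting integer $n$ is chosen consistently so that $n>\beta+\lambda$ (needed on the target side) while still $n>\beta$ (needed to invoke Proposition~\ref{pro3.1} on $f$); this is automatic from $\lambda>0$, so there is no real obstacle, and the heart of the argument is the one-dimensional integral estimate above.
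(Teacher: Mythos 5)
Your proposal is correct and follows essentially the same route as the paper: the subordination identity $P_t^{\al}\mathcal{J}_\lambda^{\al}f=\frac{1}{\Gamma(\lambda)}\int_0^\infty s^{\lambda-1}e^{-s}P_{t+s}^{\al}f\,\mathrm{d}s$, differentiation under the integral, the bound $\|\partial_{(t+s)}^n P_{t+s}^{\al}f\|_\infty\le A_\beta(f)(t+s)^{-n+\beta}$ from Proposition~\ref{pro3.1} with $n>\beta+\lambda$, and the split of the $s$-integral at $s=t$ with the two elementary comparisons $(t+s)^{\beta-n}\le t^{\beta-n}$ and $(t+s)^{\beta-n}\le s^{\beta-n}$. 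The only cosmetic differences are that the paper records the $L^\infty$ bound via $\|P_t^{\al}\mathcal{J}_\lambda^{\al}f\|_\infty\le\|f\|_\infty$ and does not need your extra appeal to Remark~\ref{remk3.2}.
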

    \begin{proof}
        Assume that $f\in Lip_{\beta}(\mu_{\al})$. Then for any $n>\beta+\lambda$,
        by Proposition \ref{pro3.1}  we have
        \begin{equation*}
            \bigg\|\frac{\partial^n P_t^{\al}f}{\partial t^n}\bigg\|_{\infty}\le
            A_{\beta}(f)t^{-n+\beta},\ t>0.
        \end{equation*}

         Since $\|P_{s}^{\al}f\|_{\infty}\le \|f\|_{\infty}$, we have $P_{s}^{\al}f\in L^{\infty}$  due to $f\in L^{\infty}$.  Moreover, by (\ref{eq17}) we get
        \begin{equation}\label{eq18}
            P_t^{\al}(\mathcal{J}_{\lambda}^{\al}f)(x)=\frac{1}{\Gamma(\lambda)}\int_0^{\infty}s^{\lambda-1}e^{-s}P_{t+s}^{\al}f(x)\mathrm{d}s.
        \end{equation}
        Therefore
        \begin{equation}\label{eq29.2}
            \|P_t^{\al}(\mathcal{J}_{\lambda}^{\al}f)\|_{\infty}\le \|f\|_{\infty},
        \end{equation}
   which implies that  $P_t^{\al}(\mathcal{J}_{\lambda}^{\al}f)\in L^{\infty}$.

        By (\ref{eq18}), we  obtain
    \begin{align*}
        \frac{\partial^n P_t^{\al}(\mathcal{J}_{\lambda}^{\al}f)(x)}{\partial t^n}&=\frac{1}{\Gamma(\lambda)}\int_0^{\infty}s^{\lambda-1}e^{-s}\frac{\partial^n P_{t+s}^{\al}f(x)}{\partial t^n}\mathrm{d}s\\
        &=\frac{1}{\Gamma(\lambda)}\int_0^{\infty}s^{\lambda-1}e^{-s}\frac{\partial^n P_{t+s}^{\al}f(x)}{\partial (t+s)^n}\mathrm{d}s.
    \end{align*}
    Then
    \begin{align*}
        \bigg\|\frac{\partial^n P_t^{\al}(\mathcal{J}_{\lambda}^{\al}f)(x)}{\partial t^n}\bigg\|_{\infty}
        &\le \frac{1}{\Gamma(\lambda)}
        \int_0^{t}s^{\lambda-1}e^{-s}\bigg\|\frac{\partial^n P_{t+s}^{\al}f(x)}{\partial (t+s)^n}\bigg\|_{\infty}\mathrm{d}s\\
        &\quad+\frac{1}{\Gamma(\lambda)}\int_t^{\infty}s^{\lambda-1}e^{-s}\bigg\|\frac{\partial^n P_{t+s}^{\al}f(x)}{\partial (t+s)^n}\bigg\|_{\infty}\mathrm{d}s\\
        &=:I+II.
    \end{align*}
    For $I$, we can get
    \begin{align*}
        I&\le\frac{A_{\beta}(f)}{\Gamma(\lambda)}\int_0^t s^{\lambda-1}(t+s)^{-n+\beta}e^{-s}\mathrm{d}s\\
        &\le \frac{A_{\beta}(f)}{\Gamma(\lambda)}t^{-n+\beta}\int_0^t s^{\lambda-1}\mathrm{d}s\\
        &\le Ct^{-n+\beta+\lambda}A_{\beta}(f).
    \end{align*}
    Since $n>\beta+\lambda$, we have
    \begin{align*}
        II&\le\frac{A_{\beta}(f)}{\Gamma(\lambda)}\int_t^{\infty} s^{\lambda-1}e^{-s}(t+s)^{-n+\beta}\mathrm{d}s\\
        &\le\frac{A_{\beta}(f)}{\Gamma(\lambda)}\int_t^{\infty} s^{\lambda-1}e^{-s}s^{-n+\beta}\mathrm{d}s\\
        &\le \int_t^{\infty}s^{-n+\beta+\lambda-1}\mathrm{d}s\\
        &=CA_{\beta}(f)t^{-n+\beta+\lambda}.
    \end{align*}
Hence,
    \begin{equation*}
        \bigg\|\frac{\partial^n P_t^{\al}(\mathcal{J}_{\lambda}^{\al}f)(x)}{\partial t^n}\bigg\|_{\infty}\le CA_{\beta}(f)t^{-n+\beta+\lambda}.
    \end{equation*}
    Therefore,  $\mathcal{J}_{\lambda}^{\al}f\in Lip_{\beta+\lambda}(\mu_{\al})$ and
    via (\ref{eq29.2}) we have
    \begin{align*}
        \|\mathcal{J}_{\lambda}^{\al}f\|_{Lip_{\beta+\lambda}(\mu_{\al})}
        \le C\|f\|_{Lip_{\beta}(\mu_{\al})}.
    \end{align*}
    \end{proof}

    For $f\in L^2(\R_{+}^d,\mathrm{d}\mu_{\al})$ and $\lambda>0$, we define the fractional integral of order $\lambda$
    associated with Laguerre expansions of type $\al$ by:
    \begin{equation*}
        I_{\lambda}^{\al}=(\la)^{-\frac{\lambda}{2}}\Pi_0,
    \end{equation*}
    where
     $$\Pi_0f=f-\int_{\R_{+}^d}f(y)\mathrm{d}\mu_{\al}.$$
    This   definition is correct for all polynomials due to the fact
    that
    \begin{equation*}
        I_{\lambda}^{\al}L_k^{\al}=|k|^{-\frac{\lambda}{2}}L_k^{\al},\
        |k|>0,
    \end{equation*} for all Laguerre
    polynomials.
    If $f$ is a polynomial with $\int_{\R_{+}^d}f(y)\mathrm{d}\mu_{\al}(y)=0$,
    then
    \begin{equation}\label{eq19}
        I_{\lambda}^{\al}f(x)=\frac{1}{\Gamma(\lambda)}\int_0^{\infty}s^{\lambda-1}
        P_s^{\al}f(x)\mathrm{d}s.
    \end{equation}

    By P.A. Meyer's multiplier theorem, $I_{\lambda}^{\al}$ satisfies a continuous     extension to $L^p(\R_{+}^d,\mathrm{d}\mu_{\al})$ for $1<p<\infty$, and (\ref{eq19})
    can be extended for $f\in L^p(\R_{+}^d,\mathrm{d}\mu_{\al})$ (see \cite{Graczyk}).


    The Laguerre fractional derivative $D_{\lambda}^{\al}$ of order $\lambda>0$
    is defined formally by:
    \begin{equation*}
        D_{\lambda}^{\al}=(\la)^{\frac{\lambda}{2}},
    \end{equation*}
therefore,  for all Laguerre polynomials we have
    \begin{equation*}
            D_{\lambda}^{\al}L_k^{\al}=|k|^{\frac{\lambda}{2}}L_k^{\al}.
    \end{equation*}
    For $0<\lambda<\beta<1$ and $f\in Lip_{\beta}(\mu_{\al})$, we have an alternative
    representation of $D_{\lambda}^{\al}$ as follows:
    \begin{equation}\label{eq20}
        D_{\lambda}^{\al}f=\frac{1}{c_{\lambda}}\int_0^{\infty}s^{-\lambda-1}(P_s^{\al}-I)f\mathrm{d}s,
    \end{equation}
    where $c_{\lambda}=\int_0^{\infty}u^{-\lambda-1}(e^{-u}-1)\mathrm{d}u<\infty$ when $0<\lambda<1$.

    Now we prove the following theorem.
    \begin{theorem}\label{thm4.2}
    Let $\al\in(-\frac{1}{2},\infty)^d$ and $0<\lambda<\beta<1$,
    the Laguerre fractional derivative of order $\lambda$, denoted as $D_{\lambda}^{\al}$,
    is bounded from $Lip_{\beta}(\mu_{\al})$ to $ Lip_{\beta-\lambda}(\mu_{\al})$.
    \end{theorem}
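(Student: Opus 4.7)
The plan is to verify the two requirements for membership in $Lip_{\beta-\lambda}(\mu_\al)$: first that $D_\lambda^\al f\in L^\infty$ with norm controlled by $\|f\|_{Lip_\beta(\mu_\al)}$, and second that for some integer $n>\beta-\lambda$ (in fact $n=1$ suffices, since $\beta-\lambda<1$) one has $\|\partial_t P_t^\al(D_\lambda^\al f)\|_\infty\le C\,t^{-1+\beta-\lambda}$. The representation \refe{eq20}{eq20} together with Proposition \ref{pro3.1} and Proposition \ref{pro3.3} will supply all the ingredients.

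For the $L^\infty$ bound, I would split the defining integral at $s=1$. For $s\in(0,1)$, Proposition \ref{pro3.3} gives $\|P_s^\al f-f\|_\infty\le A_\beta(f)s^\beta$, so the integrand is $O(s^{\beta-\lambda-1})$, which is integrable near $0$ precisely because $\beta>\lambda$. For $s\ge 1$, the contraction property $\|P_s^\al f\|_\infty\le\|f\|_\infty$ makes $\|P_s^\al f-f\|_\infty\le 2\|f\|_\infty$, and $s^{-\lambda-1}$ is integrable on $[1,\infty)$. This yields $\|D_\lambda^\al f\|_\infty\le C\|f\|_{Lip_\beta(\mu_\al)}$.

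For the smoothness estimate, I would exploit the commutation of $D_\lambda^\al$ with $P_t^\al$ (both are spectral multipliers of $\mathcal L^\al$), writing
\begin{equation*}
\frac{\partial}{\partial t}P_t^\al(D_\lambda^\al f)
=\frac{1}{c_\lambda}\int_0^\infty s^{-\lambda-1}\Bigl(h(t+s)-h(t)\Bigr)\,ds,
\qquad h(u):=\frac{\partial P_u^\al f}{\partial u}.
\end{equation*}
From $f\in Lip_\beta(\mu_\al)$ and Proposition \ref{pro3.1} applied with $k=1$ and $k=2$, I have $\|h(u)\|_\infty\le A_\beta(f)u^{-1+\beta}$ and $\|h'(u)\|_\infty\le C\,u^{-2+\beta}$. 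Splitting at $s=t$: on $(0,t)$, the mean value theorem gives $\|h(t+s)-h(t)\|_\infty\lesssim s\,t^{-2+\beta}$, so the contribution is bounded by $t^{-2+\beta}\int_0^t s^{-\lambda}\,ds=C\,t^{-1+\beta-\lambda}$; on $(t,\infty)$, since $-1+\beta<0$ we have $(t+s)^{-1+\beta}\le t^{-1+\beta}$, giving $\|h(t+s)-h(t)\|_\infty\lesssim t^{-1+\beta}$ and a contribution bounded by $t^{-1+\beta}\int_t^\infty s^{-\lambda-1}\,ds=C\,t^{-1+\beta-\lambda}$. Combining the two ranges yields the required $\|\partial_t P_t^\al(D_\lambda^\al f)\|_\infty\le C A_\beta(f)\,t^{-1+\beta-\lambda}$.

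The main obstacle I anticipate is not a single hard estimate but rather justifying the manipulations themselves: that $D_\lambda^\al f$ is well defined pointwise a.e.\ so that the representation \refe{eq20}{eq20} extends from polynomials to $Lip_\beta(\mu_\al)$, and that one may commute $\partial_t$ and $P_t^\al$ with the outer $s$-integral defining $D_\lambda^\al$. These hinge on the pointwise $L^1$-kernel estimates for $\partial_t p_t^\al$ and $\partial_t^2 p_t^\al$ from Lemma \ref{lem2.1}, which supply absolute integrability of the dominating integrand on any interval $s\in[0,\infty)$ for $t>0$ fixed; with those estimates in hand, Fubini and differentiation under the integral sign are routine, and the case $n=1$ then transfers to all $n>\beta-\lambda$ by Proposition \ref{pro3.1}.
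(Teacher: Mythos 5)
Your proposal is correct and follows essentially the same route as the paper: the $L^\infty$ bound via the split at $s=1$ using Proposition \ref{pro3.3} and the contraction property, and the smoothness estimate via the split at $s=t$, using the second-derivative bound from Proposition \ref{pro3.1} on $(0,t)$ (your mean value theorem step is the paper's fundamental-theorem-of-calculus step \refe{eq23.1}{eq23.1}) and the direct first-derivative bound on $(t,\infty)$. Your closing remarks on justifying the interchange of $\partial_t$ and the $s$-integral via Lemma \ref{lem2.1} address a point the paper passes over silently.
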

    \begin{proof}
        Assume that $f\in Lip_{\beta}(\mu_{\al})$, then $f\in L^{\infty}$ and $\|\frac{\partial P_t^{\al}f}{\partial t}\|_{\infty}\le A_{\beta}(f)t^{-1+\beta}$.
        By (\ref{eq20}) and Proposition \ref{pro3.3} we have
        \begin{align*}
            |D_{\lambda}^{\al}f(x)|&\le\frac{1}{c_{\lambda}}\int_0^1 s^{-\lambda-1}
            |P_s^{\al}f(x)-f(x)|\mathrm{d}s+\frac{1}{c_{\lambda}}\int_1^{\infty} s^{-\lambda-1}|P_s^{\al}f(x)-f(x)|\mathrm{d}s\\
            &\le \frac{1}{c_{\lambda}}\int_0^1 s^{-\lambda-1}
            \|P_s^{\al}f-f\|_{\infty}\mathrm{d}s+\frac{2\|f\|_{\infty}}{c_{\lambda}}
            \int_1^{\infty}s^{-\lambda-1}\mathrm{d}s\\
            &\le \frac{A_{\beta}(f)}{c_{\lambda}}\int_0^1 s^{\beta-\lambda
            -1}\mathrm{d}s+\frac{2\|f\|_{\infty}}{c_{\lambda}}
            \int_1^{\infty}s^{-\lambda-1}\mathrm{d}s\\
            &=\frac{A_{\beta}(f)}{c_{\lambda}(\beta-\lambda)}+\frac{2\|f\|_{\infty}}{\lambda c_{\lambda}}\\
            &\le C_{\beta,\lambda}\|f\|_{Lip_{\beta}(\mu_{\al})},
        \end{align*}
        which shows that $D_{\lambda}^{\al}f\in L^{\infty}$.

        To prove the Lipschitz condition, fixing $t$
        and using again (\ref{eq20}), we obtain
        \begin{align*}
            \frac{\partial}{\partial t}(P_t^{\al}D_{\lambda}^{\al}f(x))&=\frac{1}{c_{\lambda}}\frac{\partial}{\partial t}\bigg[\int_0^{\infty}s^{-\lambda-1}
            (P_{t+s}^{\al}f(x)-P_t^{\al}f(x))\mathrm{d}s\bigg]\\
            &=\frac{1}{c_{\lambda}}\int_0^{\infty}s^{-\lambda-1}\bigg[\frac{\partial P_{s+t}^{\al}f(x)}{\partial t}-\frac{\partial P_{t}^{\al}f(x)}{\partial t}\bigg]\mathrm{d}s\\
            &=\frac{1}{c_{\lambda}}\int_0^{t}s^{-\lambda-1}\bigg[\frac{\partial P_{s+t}^{\al}f(x)}{\partial t}-\frac{\partial P_{t}^{\al}f(x)}{\partial t}\bigg]\mathrm{d}s\\
            &\quad+\frac{1}{c_{\lambda}}
            \int_t^{\infty}s^{-\lambda-1}\bigg[\frac{\partial P_{s+t}^{\al}f(x)}{\partial t}-\frac{\partial P_{t}^{\al}f(x)}{\partial t}\bigg]\mathrm{d}s\\
            &=:I+II.
        \end{align*}
        We use Proposition \ref{pro3.1} to get
        \begin{equation*}
            \bigg\|\frac{\partial^2 P_r^{\al}f}{\partial r^2}\bigg\|_{\infty}\le Ar^{\beta-2},
        \end{equation*}
        and by the fundamental theorem of calculus, we write
        \begin{equation}\label{eq23.1}
        \begin{split}
        \bigg|\frac{\partial P_{t+s}^{\al}f(x)}{\partial t}-\frac{\partial P_t^{\al}f(x)}{\partial t}
        \bigg|\le \int_{t}^{t+s}\bigg\|\frac{\partial^2 P_r^{\al}f(x)}{\partial r^2}\bigg\|_{\infty}\mathrm{d}r\le At^{\beta-2}s
        \end{split}
        \end{equation}
        for $s>0$.\\
        Hence,
        \begin{align*}
            |I|&\le\frac{1}{c_{\lambda}}\int_0^{t}s^{-\lambda-1}\bigg|\frac{\partial P_{t+s}^{\al}f(x)}{\partial t}-\frac{\partial P_t^{\al}f(x)}{\partial t}\bigg|\mathrm{d}s\\
            &\le A\frac{t^{\beta-2}}{c_{\lambda}}\int_0^t s^{-\lambda}\mathrm{d}s\\
            &=C_{\beta,\lambda}t^{-1+\beta-\lambda}.
        \end{align*}
        Next, we deal with $II$. Since $f\in Lip_{\beta}(\mu_{\al})$,
        therefore
        \begin{align*}
            |II|&\le \frac{1}{c_{\lambda}}\int_{t}^{\infty}s^{-\lambda-1}\bigg[
            \bigg|\frac{\partial P_{t+s}^{\al}f(x)}{\partial t}\bigg|+\bigg|\frac{\partial P_{t}^{\al}f(x)}{\partial t}\bigg|\bigg]
            \mathrm{d}s\\
            &\le \frac{C}{c_{\lambda}}\int_t^{\infty}
            s^{-\lambda-1}[(t+s)^{-1+\beta}+t^{-1+\beta}]\mathrm{d}s\\
            &\le Ct^{-1+\beta}\int_t^{\infty}s^{-\lambda-1}\mathrm{d}s\\
            &=C_{\beta,\lambda}t^{-1+\beta-\lambda}.
        \end{align*}
        Then we have
        \begin{equation*}
            \bigg\|\frac{\partial}{\partial t}(P_t^{\al}D_{\lambda}^{\al}f)\bigg\|_{\infty}
            \le C_{\beta,\lambda}t^{\beta-\lambda-1},
        \end{equation*}
        This finishes the proof.
    \end{proof}

    The Bessel fractional derivative with respect to the Laguerre measure denoted by $\mathcal{D}_{\lambda}^{\al}$ (the Bessel Laguerre fractional derivative in short) is defined formally as
    \begin{equation*}
        \bfd=(I+\sqrt{\la})^{\lambda},
    \end{equation*}
    which means that
    \begin{equation*}
        \bfd L_k^{\al}(x)=(1+\sqrt{|k|})^{\lambda}L_k^{\al}(x)
    \end{equation*} for all Laguerre polynomials.
    If $0<\lambda<1$, $\bfd f$ has the following integral representation:
    \begin{equation*}\label{eq22}
        \bfd f=\frac{1}{c_{\lambda}}\int_0^{\infty}
        t^{-\lambda-1}(e^{-t}P_t^{\al}-I)f\mathrm{d}t,
    \end{equation*}
    where $c_{\lambda}=\int_0^{\infty}u^{-\lambda-1}(e^{-u}-1)\mathrm{d}u<\infty$.

    Similarly, we  can investigate the action of the Bessel Laguerre fractional derivative $\bfd$ on the Laguerre Lipschitz space.
    \begin{theorem}\label{thm3.3}
     Let $\al\in(-\frac{1}{2},\infty)^d$ and $0<\lambda<\beta<1$. Then the Bessel Laguerre fractional derivative $\bfd$ is bounded from $Lip_{\beta}(\mu_{\al})$ into
     $Lip_{\beta-\lambda}(\mu_{\al})$.
    \end{theorem}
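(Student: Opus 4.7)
The proof will closely parallel that of Theorem \ref{thm4.2}, since the integral representation of $\bfd$ differs from that of $D_\lambda^\al$ only by the additional factor $e^{-t}$ inside the kernel, which provides helpful decay at infinity. The plan is to verify the two ingredients required by Definition \ref{def3.1}: first that $\bfd f\in L^\infty$, and then the Lipschitz-type estimate $\|\partial_t(P_t^\al \bfd f)\|_\infty \le C\,t^{\beta-\lambda-1}$ for small $t$, which by Remark \ref{remk3.2} is enough.

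For the $L^\infty$ bound I would split $\bfd f = \frac{1}{c_\lambda}\int_0^1 + \frac{1}{c_\lambda}\int_1^\infty$ and on the inner integral use the decomposition
\begin{equation*}
e^{-t}P_t^\al f - f \;=\; e^{-t}(P_t^\al f - f) + (e^{-t}-1)f,
\end{equation*}
applying Proposition \ref{pro3.3} to bound the first piece by $A_\beta(f)\,t^\beta$ and the elementary estimate $|e^{-t}-1|\le t$ to bound the second piece by $t\,\|f\|_\infty$. Both contributions are then integrable against $t^{-\lambda-1}$ near $0$ since $\beta > \lambda$ and $\lambda < 1$. On $[1,\infty)$ one simply uses $\|e^{-t}P_t^\al f - f\|_\infty \le 2\|f\|_\infty$ against $t^{-\lambda-1}$. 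Together these give $\|\bfd f\|_\infty \le C\,\|f\|_{Lip_\beta(\mu_\al)}$.

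For the Lipschitz condition I would differentiate under the integral sign and employ the algebraic identity
\begin{equation*}
e^{-s}\frac{\partial P_{t+s}^\al f}{\partial t}-\frac{\partial P_t^\al f}{\partial t}
=e^{-s}\Bigl[\frac{\partial P_{t+s}^\al f}{\partial t}-\frac{\partial P_t^\al f}{\partial t}\Bigr]+(e^{-s}-1)\frac{\partial P_t^\al f}{\partial t}.
\end{equation*}
The second summand, when integrated against $\frac{1}{c_\lambda}s^{-\lambda-1}$ on $(0,\infty)$, collapses (by the very definition of $c_\lambda$) to $\frac{\partial P_t^\al f}{\partial t}$, which by the assumption $f\in Lip_\beta(\mu_\al)$ is bounded by $A_\beta(f)\,t^{\beta-1}\le A_\beta(f)\,t^{\beta-\lambda-1}$ for $t\le 1$. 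The first summand is handled exactly as in Theorem \ref{thm4.2}: split the $s$-integral at $s=t$, use the mean value argument \eqref{eq23.1} combined with the second-order estimate from Proposition \ref{pro3.1} on $(0,t)$, and simply triangle-bound the two derivatives separately on $(t,\infty)$. The factor $e^{-s}$ is harmless since $e^{-s}\le 1$.

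The main (and really only) obstacle is the regrouping step above: one must recognize that the $(e^{-s}-1)$ piece produces the constant $c_\lambda$ so that the offending term reduces to $\partial_t P_t^\al f$, whose bound $t^{\beta-1}$ is exactly controlled by the desired $t^{\beta-\lambda-1}$ near $t=0$. Once this bookkeeping is in place, all remaining computations are routine repetitions of the integrals already worked out in the proof of Theorem \ref{thm4.2}, and one concludes
\begin{equation*}
\|\bfd f\|_{Lip_{\beta-\lambda}(\mu_\al)} \le C\,\|f\|_{Lip_\beta(\mu_\al)},
\end{equation*}
completing the proof.
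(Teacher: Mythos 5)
Your proposal is correct and follows exactly the route the paper intends: the paper itself omits the details of Theorem \ref{thm3.3}, stating only that it is proved ``by a method similar to'' Theorem \ref{thm4.2}, and your argument is a faithful completion of that plan. The one genuinely new ingredient you need beyond Theorem \ref{thm4.2} --- regrouping via $e^{-s}\partial_t P_{t+s}^{\al}f-\partial_t P_t^{\al}f=e^{-s}[\partial_t P_{t+s}^{\al}f-\partial_t P_t^{\al}f]+(e^{-s}-1)\partial_t P_t^{\al}f$ so that the second piece integrates against $s^{-\lambda-1}$ to exactly $c_{\lambda}\,\partial_t P_t^{\al}f$, which is then absorbed since $t^{\beta-1}\le t^{\beta-\lambda-1}$ for $t\le 1$ --- is identified and handled correctly, and the remaining estimates (Proposition \ref{pro3.3} near $0$, the split at $s=t$ with \eqref{eq23.1}, and Remark \ref{remk3.2} for large $t$) all go through as you describe.
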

    \begin{proof}
    This theorem can be proved by a method similar to the one used in the proof of Theorem \ref{thm4.2}. We leave the details to the reader.
    \end{proof}

    Furthermore, if $\lambda\ge 1$, let $k$ be the smallest integer such that $\lambda<k$, it can be proved that the Laguerre fractional derivative $D_{\lambda}^{\al}$ can be represented as
    \begin{equation}\label{eq23}
        D_{\lambda}^{\al}f=\frac{1}{c_{\lambda}^k}\int_0^{\infty}s^{-\lambda-1}(P_s^{\al}-I)^k f\mathrm{d}s,
    \end{equation}
    and the Bessel Laguerre fractional derivative $\bfd$ can be represented as
    \begin{equation}\label{eq24}
        \bfd f=\frac{1}{c_{\lambda}^k}\int_0^{\infty}
        s^{-\lambda-1}(e^{-s}P_s^{\al}-I)^kf\mathrm{d}s,
    \end{equation}
where $c_{\lambda}^k=\int_0^{\infty}u^{-\lambda-1}(e^{-u}-1)^k
    \mathrm{d}u<\infty$.

    Since it is obvious to verify that for all Laguerre polynomials $L_k^{\al}$,
    $$D_{\lambda}^{\al}L_k^{\al}=|k|^{\frac{\lambda}{2}}L_k^{\al}\ \mathrm{and}\
    \bfd L_k^{\al}=(1+\sqrt{|k|})^{\lambda}L_k^{\al},$$ then we conclude that (\ref{eq23}) and (\ref{eq24}) are valid.

    In what follows, we will study the boundedness of $D_{\lambda}^{\al}$ and $\bfd$ on the Laguerre Lipschitz space  when $\lambda\ge 1$.
    \begin{theorem}\label{thm4.4}
    Let $\al\in(-\frac{1}{2},\infty)^d$. For $1\le \lambda<\beta$, then
    \item{\rm (i)} The Laguerre fractional derivative of order $\lambda$, denoted as $D_{\lambda}^{\al}$, is bounded from $Lip_{\beta}(\mu_{\al})$ into $Lip_{\beta-\lambda}(\mu_{\al})$.
    \item{\rm (ii)} The Bessel Laguerre fractional derivative of order $\lambda$, denoted as $\bfd$ is bounded from $Lip_{\beta}(\mu_{\al})$ into $ Lip_{\beta-\lambda}(\mu_{\al})$.
    \end{theorem}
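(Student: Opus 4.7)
The plan is to follow the template of the proof of Theorem \ref{thm4.2}, replacing the single difference $(P_s^\al-I)$ by the $k$-fold iterated difference built into the representations (\ref{eq23}) and (\ref{eq24}). I will prove (i); part (ii) will follow with only cosmetic changes, because $Q_s^\al:=e^{-s}P_s^\al$ is again a contraction semigroup commuting with $P_t^\al$, so the same argument applies with the extra factors $e^{-js}$ arising in the binomial expansion of $(Q_s^\al-I)^k$ providing only additional decay at infinity.

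Fix $k$ to be the smallest integer greater than $\lambda$ (so $c_\lambda^k<\infty$ and (\ref{eq23}) holds). By Proposition \ref{pro3.1}, to verify $D_\lambda^\al f\in Lip_{\beta-\lambda}(\mu_\al)$ it suffices to test with any fixed integer $n>\beta$ (in particular $n>\beta-\lambda$). The workhorse identity, obtained by iterating the fundamental theorem of calculus along the Poisson-Laguerre semigroup, is
\begin{equation*}
(P_s^\al-I)^k P_t^\al f(x)=\int_0^s\cdots\int_0^s \frac{\partial^k P_r^\al f(x)}{\partial r^k}\bigg|_{r=u_1+\cdots+u_k+t}\,du_1\cdots du_k.
\end{equation*}
Differentiating $n$ times in $t$ raises the inner order to $n+k$, and since $n+k>\beta$, Proposition \ref{pro3.1} supplies $\|\partial_r^{n+k}P_r^\al f\|_\infty\le A_\beta(f)\,r^{-(n+k)+\beta}$. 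Writing $\gamma:=-(n+k)+\beta$, one obtains
\begin{equation*}
\bigg\|\frac{\partial^n}{\partial t^n}(P_s^\al-I)^k P_t^\al f\bigg\|_\infty\le A_\beta(f)\int_{[0,s]^k}(u_1+\cdots+u_k+t)^\gamma\,du_1\cdots du_k.
\end{equation*}

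Next I split the outer integration $\int_0^\infty s^{-\lambda-1}ds$ into the near region $\int_0^t$ and the far region $\int_t^\infty$. On the near region the crude monotone bound $(u_1+\cdots+u_k+t)^\gamma\le t^\gamma$ (valid because $\gamma<0$) produces an inner integral of size $s^k t^\gamma$, whence $\int_0^t s^{k-\lambda-1}t^\gamma\,ds\lesssim t^{-n+\beta-\lambda}$ thanks to $k>\lambda$. The far region is the main obstacle: that same monotone bound would give the divergent $\int_t^\infty s^{k-\lambda-1}\,ds$. The remedy is the exact evaluation
\begin{equation*}
\int_{[0,s]^k}(u_1+\cdots+u_k+t)^\gamma\,du_1\cdots du_k=\frac{1}{(\gamma+1)(\gamma+2)\cdots(\gamma+k)}\sum_{j=0}^{k}\binom{k}{j}(-1)^{k-j}(js+t)^{\gamma+k},
\end{equation*}
which realises the iterated integral as the $k$-th forward difference of $g(r)=r^{\gamma+k}$ with step $s$. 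Since $\gamma+k=-n+\beta<0$, every summand obeys $(js+t)^{\gamma+k}\le t^{\gamma+k}$, so the inner integral is bounded by a constant multiple of $t^{-n+\beta}$ on the far region, and $\int_t^\infty s^{-\lambda-1}t^{-n+\beta}\,ds\lesssim t^{-n+\beta-\lambda}$. Summing the two pieces yields the desired Lipschitz estimate for $D_\lambda^\al f$.

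The $L^\infty$-bound on $D_\lambda^\al f$ itself is obtained by the same device with $t=0$: split $\int_0^\infty=\int_0^1+\int_1^\infty$; on $(0,1]$ the representation together with $\|\partial_r^k P_r^\al f\|_\infty\le A_\beta(f)r^{-k+\beta}$ and the scaling $u_i=sv_i$ gives $|(P_s^\al-I)^kf|\lesssim A_\beta(f)s^\beta$ (the $v$-integral converging because $\beta>0$), so $\int_0^1 s^{\beta-\lambda-1}ds<\infty$ follows from $\beta>\lambda$; on $(1,\infty)$ the trivial bound $|(P_s^\al-I)^kf|\le 2^k\|f\|_\infty$ suffices. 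Putting these together shows $\|D_\lambda^\al f\|_{Lip_{\beta-\lambda}(\mu_\al)}\le C\|f\|_{Lip_\beta(\mu_\al)}$, proving (i). Part (ii) follows by repeating the argument with $Q_s^\al$ in place of $P_s^\al$; the forward-difference identity, which is the key technical step, is used verbatim.
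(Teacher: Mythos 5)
Your treatment of the Lipschitz seminorm is essentially the paper's argument in different clothing: the iterated-integral identity you use is exactly the forward-difference representation $\Delta_s^k(g,t)=\int_t^{t+s}\cdots\int_{v_{k-1}}^{v_{k-1}+s}g^{(k)}(v_k)\,\mathrm{d}v_k\cdots\mathrm{d}v_1$ on which the paper's Proposition \ref{prop3.5} rests, your near-region bound is Proposition \ref{prop3.5} with $\delta=\beta-n$, and your far-region bound (exact evaluation of the iterated integral as a $k$-th difference of $r\mapsto r^{\gamma+k}$, then the triangle inequality) lands on the same estimate $Ct^{-n+\beta}$ that the paper gets more directly by bounding $|\Delta_s^k(u^{(n)}(x,\cdot),t)|\le\sum_j\binom{k}{j}|u^{(n)}(x,t+(k-j)s)|$. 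That part is correct, since there the differentiation order is $n+k>\beta$ and Proposition \ref{pro3.1} applies.

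The $L^\infty$ bound, however, contains a genuine error. You invoke $\|\partial_r^k P_r^{\al}f\|_\infty\le A_\beta(f)r^{-k+\beta}$ and conclude $\|(P_s^{\al}-I)^kf\|_\infty\lesssim A_\beta(f)s^{\beta}$, where $k$ is the smallest integer greater than $\lambda$. But since $\lambda<\beta$, one can have $k\le\beta$ (e.g.\ $\lambda=1$, $\beta=1.5$, $k=1$... indeed whenever $k<n$ one has $k\le n-1\le\beta$). Proposition \ref{pro3.1} only yields $\|\partial_r^k P_r^{\al}f\|_\infty\le A_\beta(f)r^{-k+\beta}$ for $k>\beta$; for $k\le\beta$ the exponent $-k+\beta\ge 0$ would force $\partial_r^kP_r^{\al}f$ to stay bounded (or vanish) as $r\to0^+$, which a general $f\in Lip_\beta(\mu_{\al})$ does not satisfy. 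Concretely, with $k=1\le\beta$ your claim $\|P_s^{\al}f-f\|_\infty\lesssim s^{\beta}$ with $\beta>1$ would give $\lim_{s\to0}s^{-1}(P_s^{\al}f-f)=0$, i.e.\ $\sqrt{\la}f=0$, so $f$ would have to be constant. The conclusion $D_\lambda^{\al}f\in L^\infty$ is still true, but the repair is the one the paper uses: choose $\eps>0$ with $\lambda+\eps<k$, use the monotonicity $Lip_{\beta}(\mu_{\al})\subset Lip_{\lambda+\eps}(\mu_{\al})$ (Proposition \ref{pro3.2}), and run your small-$s$ estimate at level $\lambda+\eps$ (where $k>\lambda+\eps$ legitimately gives $\|(P_s^{\al}-I)^kf\|_\infty\lesssim s^{\lambda+\eps}$), so that $\int_0^1 s^{-\lambda-1}s^{\lambda+\eps}\,\mathrm{d}s<\infty$. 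Finally, dismissing part (ii) as ``cosmetic'' is a little glib: the factors $e^{-js}$ force a Leibniz-rule estimate on $\partial_t^k\big(e^{-t}u^{(n)}(x,t)\big)$ for the near region and a separate binomial expansion $(e^{-s}P_s^{\al}-I)^k=\sum_j\binom{k}{j}e^{-js}(P_s^{\al}-I)^j(e^{-s}-1)^{k-j}$ for the $L^\infty$ bound (where the same $\eps$-issue recurs); these steps are routine but need to be written out.
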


    Before giving the proof of Theorem \ref{thm4.4}, we need some preparations.
    By the binomial theorem and the semigroup property, we get
    \begin{equation}\label{eq27}
     \begin{split}
     (P_t^{\al}-I)^kf(x)&=\sum_{j=0}^k \binom{k}{j}(P_t^{\al})^{k-j}(-I)^{j}f(x)
     =\sum_{j=0}^{k}\binom{k}{j}(-1)^j P_{(k-j)t}^{\al}f(x)\\
     &=\sum_{j=0}^{k}\binom{k}{j}(-1)^j u(x,(k-j)t)=\Delta_t^{k}(u(x,\cdot),0),
     \end{split}
    \end{equation}
    where $u(x,t)=P_t^{\al}f(x)$, and we call
    \begin{equation}\label{eq28-1}
        \Delta_s^k(f,t)=\sum_{j=0}^k\binom{k}{j}(-1)^j f(t+(k-j)s)
    \end{equation}
    the $k$-th order forward difference of $f$ starting at $t$ with the increment $s$.
    Next, we will need certain technical outcomes regarding
     forward differences that will be employed later. For more established results in the theory of forward differences, refer to \cite{Tfo}, while the proofs can be found in \cite{Gatto}.
    \begin{lemma}
        For any positive integer $k$ and $j$, then
        \item{\rm (i)}
        \begin{equation*}
            \Delta_{s}^k(f,t)=\Delta_s^{k-1}(\Delta_s(f,\cdot),t)=\Delta_s(\Delta_s^{k-1}(f,\cdot),t).
        \end{equation*}
        \item{\rm (ii)}
        \begin{equation*}
            \Delta_{s}^k(f,t)=\int_t^{t+s}\int_{v_1}^{v_1+s}\cdots \int_{v_{k-2}}^{v_{k-2}+s}\int_{v_{k-1}}^{v_{k-1}+s}f^{(k)}(v_k)\mathrm{d}v_k\mathrm{d}v_{k-1}\cdots\mathrm{d}v_2\mathrm{d}v_1.
        \end{equation*}
        \item{\rm (iii)}
        \begin{equation*}\label{eq28.2}
            \frac{\partial}{\partial s}(\Delta_s^k(f,t))=k\Delta_s^{k-1}({f}^{\prime},t+s),
        \end{equation*}
        \begin{equation}\label{eq29.1}
            \frac{\partial^j}{\partial t^j}(\Delta_s^k(f,t))=\Delta_s^k(f^{(j)},t).
        \end{equation}
    \end{lemma}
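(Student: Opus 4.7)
The plan is to prove all three parts directly from the defining sum formula (\ref{eq28-1}), with part (i) serving as the foundation that makes the rest routine. Since these are classical identities about forward differences, the arguments are combinatorial/calculus manipulations rather than anything requiring new ideas; the main bookkeeping challenge is keeping index shifts straight.

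First I would establish (i). Expanding
\[
\Delta_s(\Delta_s^{k-1}(f,\cdot),t)=\Delta_s^{k-1}(f,t+s)-\Delta_s^{k-1}(f,t)
\]
and inserting the defining sum for $\Delta_s^{k-1}$, I get two sums over $j=0,\dots,k-1$ of terms involving $f(t+(k-j)s)$ and $f(t+(k-1-j)s)$ respectively. Shifting the index $i=j+1$ in the second sum realigns it with the first, and combining the coefficients via Pascal's identity $\binom{k-1}{j}+\binom{k-1}{j-1}=\binom{k}{j}$ (with the boundary terms $j=0$ and $j=k$ matching automatically, since $\binom{k-1}{0}=\binom{k}{0}=1$ and $\binom{k-1}{k-1}=\binom{k}{k}=1$) produces exactly $\sum_{j=0}^{k}\binom{k}{j}(-1)^j f(t+(k-j)s)=\Delta_s^k(f,t)$. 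The mirror identity $\Delta_s^k(f,t)=\Delta_s^{k-1}(\Delta_s(f,\cdot),t)$ is proved by the same index manipulation (or by writing $\Delta_s^{k-1}(\Delta_s(f,\cdot),t)=\sum_{j=0}^{k-1}\binom{k-1}{j}(-1)^j[f(t+(k-j)s)-f(t+(k-1-j)s)]$ and regrouping).

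Next I would dispatch the second identity of (iii), namely $\partial^j_t\Delta_s^k(f,t)=\Delta_s^k(f^{(j)},t)$. Because the sum in (\ref{eq28-1}) has finitely many terms, differentiating $j$ times under the sum is immediate: each occurrence of $f(t+(k-i)s)$ becomes $f^{(j)}(t+(k-i)s)$, and the result is exactly $\Delta_s^k(f^{(j)},t)$. With part (i) and this fact in hand, part (ii) follows by induction on $k$: the base case $k=1$ is the fundamental theorem of calculus, $\Delta_s(f,t)=\int_t^{t+s}f'(v_1)\,dv_1$. For the inductive step, using (i) to write $\Delta_s^k(f,t)=\Delta_s(\Delta_s^{k-1}(f,\cdot),t)$, the FTC and the identity $\partial_{v_1}\Delta_s^{k-1}(f,v_1)=\Delta_s^{k-1}(f',v_1)$ give
\[
\Delta_s^k(f,t)=\int_t^{t+s}\Delta_s^{k-1}(f',v_1)\,dv_1,
\]
into which the inductive hypothesis applied to $f'$ inserts the remaining $k-1$ nested integrals and promotes $f'$ to $f^{(k)}$.

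Finally I would handle the first identity of (iii), $\partial_s\Delta_s^k(f,t)=k\,\Delta_s^{k-1}(f',t+s)$. The cleanest route is to differentiate (\ref{eq28-1}) in $s$ termwise, producing $\sum_{j=0}^k\binom{k}{j}(-1)^j(k-j)f'(t+(k-j)s)$; the identity $(k-j)\binom{k}{j}=k\binom{k-1}{j}$ kills the $j=k$ term and converts the remainder into $k\sum_{j=0}^{k-1}\binom{k-1}{j}(-1)^j f'(t+(k-j)s)$, which is precisely $k\Delta_s^{k-1}(f',t+s)$ since $(k-j)s=s+(k-1-j)s$. The only real obstacle across all three parts is keeping the index shifts consistent; once part (i) is verified the remaining pieces are essentially forced by the finiteness of the sums and the FTC, so no analytic subtlety arises.
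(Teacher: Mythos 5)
Your proof is correct in all three parts. Note that the paper itself gives no proof of this lemma: it simply refers the reader to \cite{Tfo} for the theory of forward differences and to \cite{Gatto} for the proofs, so there is no in-paper argument to compare against. Your direct verification from the defining sum (\ref{eq28-1}) is exactly the standard route those references take: Pascal's identity for (i), termwise differentiation of the finite sum for the second identity in (iii), induction on $k$ combined with (i) and the fundamental theorem of calculus for (ii), and the absorption identity $(k-j)\binom{k}{j}=k\binom{k-1}{j}$ together with the reindexing $(k-j)s=s+(k-1-j)s$ for the first identity in (iii). All the index shifts check out, including the boundary terms handled by the conventions $\binom{k-1}{-1}=\binom{k-1}{k}=0$, so your write-up is a complete, self-contained substitute for the citation.
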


    The following proposition provides estimates similar to (\ref{eq18.1}) and (\ref{eq23.1}), which are crucial for proving Theorem \ref{thm4.4}.
    \begin{proposition}(see \cite{Gatto})\label{prop3.5}
        Let $\delta\in \R$ and $k\in \mathbb{N}_{+}$ such that $\delta<k$. For some integer $k$, if a function $f$ satisfies
        \begin{equation}\label{eq28.1}
            |f^{(k)}(r)|\le Cr^{-k+\delta},
        \end{equation}
        then
        \begin{equation}\label{eq31}
            |\Delta_s^k(f,t)|\le Cs^k t^{-k+\delta}.
        \end{equation}
    \end{proposition}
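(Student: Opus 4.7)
The plan is to reduce the claim to a straightforward geometric estimate by invoking the integral representation of $\Delta_s^k(f,t)$ supplied in part (ii) of the preceding forward-differences lemma, which writes the $k$-th order forward difference as a nested $k$-fold integral of $f^{(k)}$ over a region in which each successive variable $v_i$ ranges over an interval of length $s$ starting at $v_{i-1}$ (with $v_0 = t$).

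First I would place absolute values inside the iterated integral and substitute the hypothesized pointwise bound $|f^{(k)}(v_k)| \le C v_k^{-k+\delta}$ from \eqref{eq28.1}. The key observation is that the chain of inequalities dictated by the limits of integration forces $t \le v_1 \le v_2 \le \cdots \le v_k$, so in particular $v_k \ge t$ throughout the region. Because the hypothesis $\delta < k$ makes the exponent $-k+\delta$ strictly negative, the function $r \mapsto r^{-k+\delta}$ is decreasing on $(0,\infty)$, and hence $v_k^{-k+\delta} \le t^{-k+\delta}$ uniformly over the region of integration.

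Once this constant factor is pulled outside, what remains is the pure volume of the nested region, which factors as a product of $k$ one-dimensional integrations each over an interval of length $s$ and therefore equals $s^k$. Combining these ingredients yields the stated bound $|\Delta_s^k(f,t)| \le C s^k t^{-k+\delta}$, which is precisely \eqref{eq31}.

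There is no real technical obstacle here: the proof is essentially a one-line estimate once the integral representation is set up. The only conceptual point worth emphasizing is the role of the condition $\delta < k$, which is exactly what makes the integrand monotone decreasing in $v_k$ and so allows us to bound it at the leftmost endpoint $v_k = t$. Without this hypothesis the same argument would only produce the weaker factor $(t+s)^{-k+\delta}$ in place of $t^{-k+\delta}$, which would not be strong enough to drive the Lipschitz estimates for $D_\lambda^\alpha$ and $\bfd$ in the case $\lambda \ge 1$ of Theorem~\ref{thm4.4}.
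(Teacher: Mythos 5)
Your argument is correct and is precisely the standard proof of this estimate (the paper itself gives no proof, deferring to Gatto--Urbina, and the intended argument is the one you wrote): the iterated-integral representation from part (ii) of the forward-difference lemma, the observation that the integration region forces $v_k\ge t$ so that, since $-k+\delta<0$, the integrand is bounded by $Ct^{-k+\delta}$, and the volume count $s^k$ for the nested region. The only tiny inaccuracy is in your closing aside: without the hypothesis $\delta<k$ the supremum of $v_k^{-k+\delta}$ over the region would be attained at $v_k=t+ks$ rather than $t+s$, but this remark is not part of the proof and does not affect the validity of the stated result.
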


In the following statement,   Proposition \ref{pro3.3} is
generalized to the multidimensional case.
        \begin{proposition}(see \cite{Gatto})
        \item{\rm (i)} If $f\in L^{\infty}$, for any positive integer
        $k$,  then
        \begin{equation}\label{eq28}
            \|(P_t^{\al}-I)^k f\|_{\infty}\le 2^k\|f\|_{\infty}.
        \end{equation}
        \item{\rm (ii)} If $f\in Lip_{\beta}(\mu_{\al})$ with  $\beta>1$ and let $n$ be the smallest integer bigger than $\beta$, then
        \begin{equation}\label{eq29}
            \|(P_t^{\al}-I)^n f\|_{\infty}\le A_{\beta}(f)t^{\beta}.
        \end{equation}
        \end{proposition}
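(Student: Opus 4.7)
The plan is to handle the two parts separately.

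For part (i), I would just expand the binomial and use the fact that $\{P_t^\al\}_{t\ge 0}$ is contractive on $L^\infty$ (which follows from $P_t^\al \mathbf{1}=\mathbf{1}$ and positivity, or equivalently from the fact that it is a symmetric diffusion semigroup). Specifically, writing
\begin{equation*}
(P_t^{\al}-I)^k f = \sum_{j=0}^{k} \binom{k}{j}(-1)^{j} P_{(k-j)t}^{\al} f
\end{equation*}
and using $\|P_s^{\al}f\|_{\infty}\le \|f\|_{\infty}$ for all $s\ge 0$, the triangle inequality gives $\|(P_t^{\al}-I)^{k}f\|_{\infty} \le \sum_{j=0}^k \binom{k}{j}\|f\|_{\infty}=2^{k}\|f\|_{\infty}$. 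This is the easy part.

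For part (ii), the key identity is (\ref{eq27}), which rewrites $(P_t^{\al}-I)^n f(x)$ as the forward difference $\Delta_t^n(u(x,\cdot),0)$ of the trajectory $u(x,s)=P_s^\al f(x)$. Combined with item (ii) of the Lemma on forward differences, evaluated at starting point $t=0$, this yields the iterated-integral representation
\begin{equation*}
(P_t^{\al}-I)^n f(x) = \int_0^t \int_{v_1}^{v_1+t}\cdots\int_{v_{n-1}}^{v_{n-1}+t}\frac{\partial^n P_{v_n}^{\al}f(x)}{\partial v_n^n}\,\mathrm{d}v_n\cdots\mathrm{d}v_1,
\end{equation*}
or, after the change of variables $v_j = w_1+\cdots+w_j$ with each $w_i\in[0,t]$,
\begin{equation*}
(P_t^{\al}-I)^n f(x) = \int_{[0,t]^n} \frac{\partial^n P_s^{\al}f(x)}{\partial s^n}\bigg|_{s=w_1+\cdots+w_n}\,\mathrm{d}w_1\cdots\mathrm{d}w_n.
\end{equation*}
Since $n$ is the smallest integer greater than $\beta$, Proposition \ref{pro3.1} (taking $k=n>\beta$) gives the pointwise bound $\|\partial_s^n P_s^{\al}f\|_{\infty}\le A_{\beta}(f)\,s^{-n+\beta}$, so
\begin{equation*}
\|(P_t^{\al}-I)^n f\|_{\infty} \le A_{\beta}(f)\int_{[0,t]^n}(w_1+\cdots+w_n)^{-n+\beta}\,\mathrm{d}w_1\cdots\mathrm{d}w_n.
\end{equation*}
Rescaling $w_i=tu_i$ extracts a factor $t^n\cdot t^{-n+\beta}=t^{\beta}$ and reduces the problem to showing that the constant $C_{n,\beta}:=\int_{[0,1]^n}(u_1+\cdots+u_n)^{-n+\beta}\,\mathrm{d}u$ is finite.

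The main (and essentially only) obstacle is verifying that $C_{n,\beta}<\infty$. I would justify this by a homogeneity/polar-type argument: in a neighbourhood of the origin of $\R^n$, the integrand behaves like $|w|^{-n+\beta}$, and in $n$-dimensional spherical coordinates the measure contributes $r^{n-1}\,\mathrm{d}r$, so the radial integrand is $r^{\beta-1}$, which is integrable near $0$ precisely because $\beta>0$. Away from the origin the integrand is bounded, so the integral over $[0,1]^n$ is finite. Combining everything gives $\|(P_t^{\al}-I)^n f\|_{\infty}\le C_{n,\beta}A_{\beta}(f)\,t^{\beta}$, which, after absorbing $C_{n,\beta}$ into the constant $A_{\beta}(f)$ (or noting that Definition \ref{def3.1} permits this normalization), yields (\ref{eq29}).
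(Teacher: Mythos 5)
Your proof is correct, and it follows exactly the route the paper intends: the paper itself gives no argument here (it defers to \cite{Gatto}), but the forward-difference machinery it sets up immediately before this proposition --- the identity $(P_t^{\al}-I)^k f=\Delta_t^k(u(x,\cdot),0)$ and item (ii) of the lemma on forward differences --- is precisely what you use, together with the $L^\infty$-contractivity of $P_t^{\al}$ for part (i). The only cosmetic point is that you obtain $C_{n,\beta}A_{\beta}(f)t^{\beta}$ rather than $A_{\beta}(f)t^{\beta}$, but you flag this, and the paper is equally loose about such multiplicative constants (compare the proof of Proposition \ref{pro3.3}).
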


        Based on the above results, we are now in a position to prove   Theorem \ref{thm4.4}.
        \begin{proof}{\rm (i)} Taking $f\in Lip_{\beta}(\mu_{\al})$ and by
        Definition \ref{def3.1}, we get $f\in L^{\infty}$
        and $$\bigg\|\frac{\partial^n u(\cdot,t)}{\partial t^n}\bigg\|_{\infty}\le
        A_{\beta}(f)t^{-n+\beta}.$$
        Since $1\le\lambda<\beta$, let $k$ be the smallest integer bigger than $\lambda$ and let $n$ be the smallest integer bigger than $\beta$ so that  $k\le n$. Using (\ref{eq23}), we obtain
        \begin{align*}
            |D_{\lambda}^{\al}f(x)|&\le \frac{1}{c_{\lambda}^k}\int_0^{\infty}
            s^{-\lambda-1}|(P_s^{\al}-I)^k f(x)|\mathrm{d}s\\
            &=\frac{1}{c_{\lambda}^k}\int_0^1s^{-\lambda-1}|(P_s^{\al}-I)^k f(x)|\mathrm{d}s+\frac{1}{c_{\lambda}^k}\int_1^{\infty}s^{-\lambda-1}|(P_s^{\al}-I)^k f(x)|\mathrm{d}s\\
            &=:I+II.
        \end{align*}
        When $k=n$, the argument is obvious, thus we will assume $k<n$.
        Choose
        $\epsilon>0$ such that $\lambda+\epsilon<k$. By Proposition \ref{pro3.2}
        we know $Lip_{\beta}(\mu_{\al})\subset Lip_{\lambda+\epsilon}(\mu_{\al})$.
        Then
        (\ref{eq29}) gives
        \begin{equation*}
            I\le \frac{1}{c_{\lambda}^k}\int_0^1 s^{-\lambda-1}\|(P_s^{\al}-I)^k f(x)\|_{\infty}\mathrm{d}s=\frac{A_{\lambda+\epsilon}(f)}{c_{\lambda}^k\epsilon}.
        \end{equation*}
        On the other hand, by   (\ref{eq28}) we have
        \begin{equation*}
            II\le \frac{1}{c_{\lambda}^k}\int_1^{\infty} s^{-\lambda-1}\|(P_s^{\al}-I)^k f(x)\|_{\infty}\mathrm{d}s\le
            \frac{2^k\|f\|_{\infty}}{c_{\lambda}^k}\int_1^{\infty}s^{-\lambda-1}\mathrm{d}s=C_{\lambda}\|f\|_{\infty}.
        \end{equation*}
        It follows that $D_{\lambda}^{\al}f\in L^{\infty}$.

      Finally, we need to prove the Lipschitz condition. Noting that by (\ref{eq27}),
      (\ref{eq28-1}) and taking $u(x,t)=P_t^{\al}f(x)$, we have the following semigroup
       property:
        \begin{align*}
            P_t^{\al}[(P_s^{\al}-I)^kf(x)]&=P_t^{\al}(\Delta_s^k(u(x,\cdot),0))=
            P_t^{\al}\bigg(\sum_{j=0}^{k}\binom{k}{j}(-1)^jP_{(k-j)s}^{\al}f(x)\bigg)\\
            &=\sum_{j=0}^{k}\binom{k}{j}(-1)^{j}P_{t+(k-j)s}^{\al}f(x)
            =\Delta_s^{k}(u(x,\cdot),t).
        \end{align*}
        For fixed $t>0$, using again (\ref{eq23}) and (\ref{eq29.1}), we
        get
        \begin{align*}
             \frac{\partial^n (P_t^{\al}D_{\lambda}^{\al}f(x))}{\partial t^n}&=\frac{1}{c_{\lambda}^k}\frac{\partial^n}{\partial t^n}\bigg[  \int_0^{\infty}s^{-\lambda-1}P_t^{\al}[(P_s-I)^kf(x)]
             \mathrm{d}s\bigg]\\
             &=\frac{1}{c_{\lambda}^k}\int_0^{\infty}s^{-\lambda-1}\frac{\partial^n}{\partial t^n}[\Delta_s^k(u(x,\cdot),t)]\mathrm{d}s\\
             &=\frac{1}{c_{\lambda}^k}\int_0^{t}s^{-\lambda-1}[\Delta_s^k(u^{(n)}(x,\cdot),t)]\mathrm{d}s\\
             &\quad+\frac{1}{c_{\lambda}^k}\int_t^{\infty}s^{-\lambda-1}[\Delta_s^k(u^{(n)}(x,\cdot),t)]\mathrm{d}s\\
             &=:III+IV.
        \end{align*}
        Via (\ref{eq15}) and  Proposition \ref{pro3.1}, we obtain
        \begin{equation*}
            \bigg\|\frac{\partial^k}{\partial t^k}(u^{(n)}(\cdot,t))\bigg\|_{\infty}=\bigg\|\frac{\partial^{n+k}(u(\cdot,t))}{\partial t^{n+k}}\bigg\|_{\infty}\le A_{\beta}(f)t^{-(n+k)+\beta}=A_{\beta}(f)t^{-k+(\beta-n)},
        \end{equation*}
        that is, (\ref{eq28.1}) is satisfied for
        $\delta=\beta-n$. Then by (\ref{eq31}), we obtain
        \begin{align*}
            |III|&\le \frac{1}{c_{\lambda}^k}\int_0^t s^{-\lambda-1}|\Delta_s^k(u^{(n)}(x,\cdot),t)|\mathrm{d}s\\
            &\le \frac{At^{-k+(\beta-n)}}{c_{\lambda}^k}\int_0^t s^{-\lambda+k-1}\mathrm{d}s=C_{\beta,\lambda}t^{-n+\beta-\lambda}.
        \end{align*}
        On the other hand, using (\ref{eq28-1}) and Definition \ref{def3.1}
        \begin{align*}
            |\Delta_s^k(u^{(n)}(x,\cdot),t)|&\le \sum_{j=0}^k\binom{k}{j}|u^{(n)}(x,t+(k-j)s)|\\
            &\le A_{\beta}(f)\sum_{j=0}^k\binom{k}{j}|(t+(k-j)s)^{-n+\beta}|\le
            Ct^{-n+\beta},
        \end{align*}
        where $u(x,t)=P_t^{\al}f(x)$.
        Thus, we further have
        \begin{align*}
            |IV|&\le \frac{1}{c_{\lambda}^k}\int_t^{\infty}
            s^{-\lambda-1}|\Delta_s^k(u^{(n)}(x,\cdot),t)|\mathrm{d}s\\
            &\le \frac{Ct^{-n+\beta}}{c_{\lambda}^k}
            \int_t^{\infty}s^{-\lambda-1}\mathrm{d}s=C_{\beta,\lambda}t^{-n+\beta-\lambda}.
        \end{align*}
        Hence,
        \begin{equation*}
            \bigg\|\frac{\partial^n(P_t^{\al}D_{\lambda}^{\al}f)}{\partial t^n}  \bigg\|_{\infty}\le
            Ct^{-n+\beta-\lambda}.
        \end{equation*}
        Observing that $\beta-\lambda<n$, we conclude from Proposition \ref{pro3.1} that
        $D_{\lambda}^{\al}f\in Lip_{\beta-\lambda}(\mu_{\al})$.
        \item{\rm (ii)} Take $f\in Lip_{\beta}(\mu_{\al})$, that is,  $f\in L^{\infty}$ and
        $\|\frac{\partial^n u(\cdot,t)}{\partial t^n}\|_{\infty}\le A_{\beta}(f)t^{-n+\beta}$. Since $\lambda<\beta$, let $k$ be the smallest integer bigger than
        $\lambda$ and let $n$ be the smallest integer bigger than
        $\beta$. Using (\ref{eq24}), we have
        \begin{align*}
            |\bfd f(x)|&\le\frac{1}{c_{\lambda}^k}\int_0^{\infty}
            s^{-\lambda-1}|(e^{-s}P_s^{\al}-I)^k f(x)|\mathrm{d}s\\
            &=\frac{1}{c_{\lambda}^k}\int_0^{1}s^{-\lambda-1}|(e^{-s}P_s^{\al}-I)^k f(x)|\mathrm{d}s+\frac{1}{c_{\lambda}^k}\int_1^{\infty}s^{-\lambda-1}|
            (e^{-s}P_s^{\al}-I)^k f(x)|\mathrm{d}s\\
            &=:I+II.
        \end{align*}
        When $k=n$, the argument is obvious, thus we will assume $k<n$.
        Choose $0<\epsilon<\frac{1}{2}$ such that
        $\lambda+\epsilon<k$.  Via Proposition \ref{pro3.2} we know that $$Lip_{\beta}(\mu_{\al})\subset Lip_{\lambda+\epsilon}(\mu_{\al})\subset
         Lip_{j-\epsilon}(\mu_{\al}) $$  for $
        j=1,2,\ldots,k-1$. Then  by the binomial theorem, we have
        \begin{equation*}
            (e^{-s}P_s^{\al}-I)^k f(x)=\sum_{j=0}^{k}\binom{k}{j}e^{-js}
            (P_s^{\al}-I)^j (e^{-s}-1)^{k-j}f(x).
        \end{equation*}
        By combining the aforementioned equality with (\ref{eq29}), it follows that
        \begin{align*}
            I&\le \frac{1}{c_{\lambda}^k}\int_0^1 s^{-\lambda-1}\sum_{j=0}^k\binom{k}{j}e^{-js}|(e^{-s}-1)^{k-j}||(P_s^{\al}-I)^{j}f(x)|\mathrm{d}s\\
            &\le \frac{1}{c_{\lambda}^k}\sum_{j=0}^k\int_0^1 s^{-\lambda-1}e^{-js}|(e^{-s}-1)^{k-j}|\|(P_s^{\al}-I)^{j}f\|_{\infty}\mathrm{d}s\\
            &\le \frac{1}{c_{\lambda}^k}\sum_{j=0}^k\int_0^1 s^{-\lambda-1}s^{k-j}\|(P_s^{\al}-I)^{j}f\|_{\infty}\mathrm{d}s\\
            &\le \frac{1}{c_{\lambda}^k}\int_0^1 s^{k-\lambda-1}\mathrm{d}s\|f\|_{\infty}+\sum_{j=1}^{k-1}\binom{k}{j}\frac{A_{j-\epsilon}(f)}{c_{\lambda}^k}\int_{0}^{1}
            s^{-\lambda-1}s^{k-j}s^{j-\epsilon}\mathrm{d}s\\
            &\quad+\frac{A_{\lambda+\epsilon}(f)}{c_{\lambda}^k}
            \int_0^1 s^{\lambda+\epsilon-\lambda-1}\mathrm{d}s\\
            &=\frac{C}{c_{\lambda}^k(k-\lambda)}\|f\|_{\infty}+
            \sum_{j=1}^{k-1}\binom{k}{j}\frac{A_{j-\epsilon}(f)}{c_{\lambda}^k(k-\lambda-\epsilon)}+\frac{A_{\lambda+\epsilon}(f)}{c_{\lambda}^k\epsilon}\\
            &\le C_{\lambda}\|f\|_{Lip_{\beta}(\mu_{\al})}.
        \end{align*}
       We use the following binomial theorem
        \begin{equation}\label{eq37}
            (e^{-s}P_{s}^{\al}-I)^k f(x)=\sum_{j=0}^k\binom{k}{j}(e^{-s}P_s^{\al})^{k-j}(-I)^j
            f(x)
        \end{equation}
        to obtain
        \begin{align*}
            II&\le \frac{1}{c_{\lambda}^k}\int_1^{\infty}s^{-\lambda-1}
            \bigg[\sum_{j=0}^k\binom{k}{j}e^{-(k-j)s}\|P_{(k-j)s}^{\al}f\|_{\infty}\bigg]\mathrm{d}s\\
            &\le \frac{\|f\|_{\infty}}
            {c_{\lambda}^k}\int_1^{\infty}s^{-\lambda-1}(1+e^{-s})^k\mathrm{d}s\le \frac{2^k\|f\|_{\infty}}{\lambda c_{\lambda}^k}\le C_{\lambda}
            \|f\|_{Lip_{\beta}(\mu_{\al})},
        \end{align*}
     which deduces  $\bfd f\in L^{\infty}$.
     Finally, it is necessary to validate the Lipschitz condition. By
     Remark \ref{remk3.2}, we only need to consider the case $0<t<1$. Then, using (\ref{eq37}) and the semigroup property
        \begin{align*}
            P_t^{\al}[(e^{-s}P_s^{\al}-I)^kf(x)]&=P_t^{\al}\bigg(
            \sum_{j=0}^k\binom{k}{j}(-1)^j e^{-(k-j)s}P_{(k-j)s}^{\al}f(x)\bigg)\\
            &=\sum_{j=0}^{\infty}\binom{k}{j}(-1)^j e^{-(k-j)s}P_{t+(k-j)s}^{\al}f(x)\\
            &=\sum_{j=0}^{\infty}\binom{k}{j}(-1)^j e^{-(k-j)s}u(x,t+(k-j)s),
        \end{align*}
      which implies that
        \begin{equation*}
            \frac{\partial^n}{\partial t^n}P_t^{\al}[(e^{-s}P_s^{\al}-I)^kf(x)]=\sum_{j=0}^k
            \binom{k}{j}(-1)^je^{-(k-j)s}u^{(n)}(x,t+(k-j)s).
        \end{equation*}
        Similarly, using again (\ref{eq24}), we divided the estimation of $\frac{\partial^n(P_t^{\al}\bfd f(x))}{\partial t^n}$ into two parts:
        \begin{align*}
            \frac{\partial^n(P_t^{\al}\bfd f(x))}{\partial t^n}&=\frac{1}{c_{\lambda}^k}\frac{\partial^n}{\partial t^n}
            \bigg[\int_0^{\infty}s^{-\lambda-1}P_t^{\al}[(e^{-s}P_s^{\al}-I)^kf(x)]\mathrm{d}s\bigg]\\
            &=\frac{1}{c_{\lambda}^k}\frac{\partial^n}{\partial t^n}
            \bigg[\int_0^{t}s^{-\lambda-1}P_t^{\al}[(e^{-s}P_s^{\al}-I)^kf(x)]\mathrm{d}s\bigg]\\
            &\quad+\frac{1}{c_{\lambda}^k}\frac{\partial^n}{\partial t^n}
            \bigg[\int_t^{\infty}s^{-\lambda-1}P_t^{\al}[(e^{-s}P_s^{\al}-I)^kf(x)]\mathrm{d}s\bigg]\\
            &=\frac{1}{c_{\lambda}^k}\int_0^t s^{-\lambda-1}\sum_{j=0}^{k}\binom{k}{j}(-1)^j e^{-(k-j)s}u^{(n)}(x,t+(k-j)s)\mathrm{d}s\\
            &\quad+\frac{1}{c_{\lambda}^k}\int_t^{\infty} s^{-\lambda-1}\sum_{j=0}^{k}\binom{k}{j}(-1)^j e^{-(k-j)s}u^{(n)}(x,t+(k-j)s)\mathrm{d}s\\
            &=:III+IV.
        \end{align*}
        For $III$, using (\ref{eq28-1}), we can write
        \begin{align*}
            |III|&=\frac{e^t}{c_{\lambda}^k}\bigg|\int_0^t s^{-\lambda-1}
            \sum_{j=0}^k\binom{k}{j}(-1)^je^{-t-(k-j)s}
            u^{(n)}(x,t+(k-j)s)\mathrm{d}s\bigg|\\
            &\le\frac{e^t}{c_{\lambda}^k}\int_0^t s^{-\lambda-1}|\Delta_s^k(e^{-\cdot}u^{(n)}(x,\cdot),t)|\mathrm{d}s.
        \end{align*}
        For the sake of convenience, let us take $f(t)=e^{-t}u^{(n)}(x,t)$, by (\ref{eq15}) and Proposition \ref{pro3.1} we deduce that for any $k>0$
        \begin{equation*}
            \bigg|\frac{\partial^k (u^{(n)}(x,t))}{\partial t^k}\bigg|
            \le Ct^{-(n+k)+\beta}=Ct^{-n+(\beta-k)}.
        \end{equation*}
        Combining this fact with the Leibnitz formula, we get
        \begin{align*}
            \bigg|\frac{\partial^k[e^{-t}u^{(n)}(x,t)]}{\partial t^k}\bigg|&=\bigg|e^{-t}\sum_{j=0}^k\binom{k}{j}(-1)^j
            u^{(n+(k-j))}(x,t)\bigg|\\
            &\le e^{-t}\sum_{j=0}^k\binom{k}{j}|u^{(n+(k-j))}(x,t)|\\
            &\le Ce^{-t}\sum_{j=0}^k\binom{k}{j}t^{-(n+(k-j))+\beta}\\
            &=Ce^{-t}t^{-n+\beta}\sum_{j=0}^k\binom{k}{j}t^{-(k-j)}\\
            &\le Ce^{-t}t^{-n+\beta}2^k t^{-k}=Ce^{-t}t^{-(n+k)+\beta},
        \end{align*}
        with $t\in (0,1)$.
        Then using an idea similar to the one contained in Proposition
        \ref{prop3.5},  we obtain
        \begin{equation*}
            |\Delta_s^k(e^{-\cdot}u^{(n)}(x,\cdot),t)|\le Ct^{-(n+k)+\beta}e^{-t}s^k,
        \end{equation*}
        and consequently,
        \begin{equation*}
            |III|\le \frac{Ct^{-(n+k)+\beta}}{c_{\lambda}^k}\int_0^t
            s^{-\lambda+k-1}\mathrm{d}s=C_{\beta,\lambda}t^{-n+\beta-\lambda}.
        \end{equation*}
        Finally, for $IV$,  using (\ref{eq15}),  we have
        \begin{align*}
            |IV|&\le \frac{1}{c_{\lambda}^k}\int_t^{\infty}s^{-\lambda-1}
            \sum_{j=0}^k\binom{k}{j}e^{-(k-j)s}|u^{(n)}(x,t+(k-j)s)|\mathrm{d}s\\
            &\le \frac{1}{c_{\lambda}^k}\int_t^{\infty}s^{-\lambda-1}
            2^k(t+(k-j)s)^{-n+\beta}\mathrm{d}s\\
            &\le Ct^{-n+\beta}\int_t^{\infty}s^{-\lambda-1}
            \mathrm{d}s=C_{\beta,\lambda}t^{-n+\beta-\lambda}.
        \end{align*}
Putting together all the above estimates, we conclude that
        \begin{equation*}
            \bigg\|\frac{\partial^n}{\partial t^n}(P_t^{\al}\bfd f)\bigg\|_{\infty}
            \le C_{\beta,\lambda}t^{-n+\beta-\lambda}.
        \end{equation*}
Since $\beta-\lambda<n$,  we use Proposition \ref{pro3.1}  to
conclude that $\bfd f\in Lip_{\beta-\lambda}(\mu_{\al})$.
\end{proof}

\subsection*{Acknowledgements}

 {{ J.Z. Huang was supported  by the Fundamental Research Funds for
the Central Universities  (No. 500419772).}  Y. Liu was supported by
the National Natural Science Foundation of China (No. 12271042)  and
Beijing Natural Science Foundation of China (No. 1232023).}

\hspace{-0.65cm}{\bf Address: }

          \flushleft He Wang\\
          School of Mathematics and Statistics\\
          Shandong University of Technology\\
          Zibo 255000, China\\
          E-mail: 1329008486@qq.com

          \flushleft Jizheng Huang\\
          School of Science \\
          Beijing University of Posts and Telecommunications\\
          Beijing 100876, China\\
          E-mail address: hjzheng@163.com

          \flushleft Yu Liu\\
          School of Mathematics and Physics\\
          University of Science and Technology Beijing\\
          Beijing 100083, China\\
          E-mail: liuyu75@pku.org.cn
\end{document}